\newcommand{\Z}{\mathbb Z}
\newcommand{\R}{\mathbb R}
\newcommand{\K}{\mathbb K}
\newcommand{\C}{\mathbb C}
\newcommand{\pp}{\mathsf{p}}
\newcommand{\mcF}{\mathcal F}
\newcommand{\mcM}{\mathcal M}
\newcommand{\mcN}{\mathcal N}
\newcommand{\mcO}{\mathcal O}
\newcommand{\mcP}{\mathcal P}
\newcommand{\mcU}{\mathcal U}
\newcommand{\mcV}{\mathcal V}
\newcommand{\mcS}{\mathcal S}
\newcommand{\mcW}{\mathcal W}
\newcommand{\F}{\mathrm{F}}
\newcommand{\id}{\mathrm{id}}
\newcommand{\Ker}{\operatorname{Ker}}
\newtheorem{theorem}{Theorem}
\newtheorem{lemma}[theorem]{Lemma}
\newtheorem{definition}[theorem]{Definition}
\newtheorem{corollary}[theorem]{Corollary}
\newtheorem{proposition}[theorem]{Proposition}
\newtheorem{example}[theorem]{Example}
\newtheorem{remark}[theorem]{Remark}
\begin{document}
	\title[$H$-covering of a supermanifold]
	{$H$-covering of a supermanifold}

	\author{Fernando A.Z. Santamaria and Elizaveta Vishnyakova}
	
	\begin{abstract}

    We develop the theory of $H$-graded manifolds
		for any finitely generated abelian group, using tools from representation theory. Furthermore, we introduce and investigate the notion of $H$-graded coverings of supermanifolds in the case where $H$ is a finite abelian group.
			\end{abstract}

	\maketitle

\section{Introduction}

The category of $A$-graded manifolds, where $A$ is an abelian group or an abelian monoid, was studied by several authors; see, for instance, \cite{Fei, Ji, KotovSalnikov, Vysoky}, see also \cite{AKT, BHM, Ker, Ly, RW}. An $A$-graded manifold is a ringed space, which is locally isomorphic to a graded domain $(S, \mcO_{\mcS})$. An element $f$ in the structure sheaf $\mcO_{\mcS}$ of an $A$-graded domain is usually assumed to be a series in colored coordinates,  that is in coordinates of weight $a\in A\setminus \{e\}$. An alternative approach, based on the use of a grading operator, is presented in \cite{Grab}, see also the references therein for further literature.

In this paper, we consider $H$-graded domains, where $H$ is a finitely generated abelian group, where elements of the structure sheaf are smooth or holomorphic functions (not series) with respect to local graded coordinates. In more detail, let $G$ be a finitely generated abelian group and $H=\operatorname{Ch}(G)$ be its dual group, that is, the group of complex characters of $G$. First, we develop the theory of graded manifolds for a finite abelian group $G$.  In this case, we have $G\simeq H$. 
The case of a finite abelian group is very special due to the fact that any representation of such a group may be decomposed into a direct sum of irreducible representations. Our definition of a $H$-graded domain is as follows. Let a homomorphism $\phi:H\to \Z_2$ be fixed.  We start with a representation of $G$ in a vector space $V$. Then $V$ is naturally $H$- and  $\Z_2$-graded in the following way
$$
V= \bigoplus_{\chi\in H} V_{\chi}, \quad V_{\bar i} = \bigoplus_{\phi(\chi) =\bar i} V_{\chi}. 
$$
Using this data we define a $H$-graded domain $(U,\mcO)$, where $U\subset V_{e}$, where $e\in H$ is the identity, and $\mcO$ is the inverse image sheaf of the sheaf of smooth or holomorphic superfunctions on $V$, see the main text for details.  Using the representation theory for finite groups we decompose the structure sheaf $\mcO$ into $H$-homogeneous components
$$
\mcO = \bigoplus_{\chi\in H} \mcO_{\chi}.
$$
This defines a $H$-gradation in the structure sheaf. We call a function $F\in \mcO$ homogeneous of weight $\chi\in H$ if 
$$
g\cdot F = \chi(g) F.
$$
In the last section, we generalize this approach to any finitely generated abelian group. In this case, we do not have a decomposition of the structure sheaf into homogeneous components. However, since the notion of a homogeneous function is defined, we can define a homogeneous morphism of $H$-graded domains, and hence a $H$-graded manifold. Our approach is related to, but distinct from, that of \cite{Grab}; see the final section for details. Our results can also be applied to construct graded group objects for graded Lie (super)algebras; see \cite{AKY, DEl, El}.

Another result of this paper is as follows. In \cite{Vicovering}, the notion of a $\mathbb{Z}$-graded covering space for a supermanifold was introduced, and later extended in \cite{Vimult} to the case of graded manifolds. The $\Z$-graded covering of a supermanifold can be seen as the space of $\mcN$-paths in a supermanifold, where $\mcN$ is a $\Z$-graded manifold. Such structures also appear in classical geometry: $m$-jet space of a manifold $X$, that is $Spec(\K[t]/ t^{m+1})$-paths in a (algebraic) manifold $X$, i.e. morphisms of $Spec(\K[t]/ t^{m+1})$ to $X$; the arc space of $X$, that is $Spec(\K[[t]])$-paths in a (algebraic) manifold $X$, where $\K[[t]]$ is the ring of formal series in $t$, see  \cite[Definition 2.1]{Ishii}.  One also considers the loop space of $X$, that is $\K((t))$-points in $X$, where $\K((t))$ is the ring of formal Laurent series in $t$, where it allows finitely many negative powers of $t$, but infinitely many nonnegative ones, see \cite[Proposition 1.5]{AM} and \cite{KV}. The jet space and arc space of $X$ satisfy a similar universal property as considered in this paper, see \cite[Theorem 1.1]{AM}, see also \cite[Proposition 1.5]{AM} for the loop space in the affine case.

The $H$-covering space constructed in this paper classifies the space of $\mcN$-paths in a manifold or supermanifold, where $\mcN$ is a $H$-graded manifold for a finite abelian group $H$. In other words, $H$-covering space classifies morphisms from a $H$-graded manifold to a supermanifold.  In more detail, for any supermanifold $\mcM$ we construct a $H$-graded manifold $\mcP$ with a covering projection $\pp: \mcP \to \mcM$, which satisfies the universal property: any $\Z_2$-graded morphism of a $H$-graded manifold to the supermanifold $\mcM$ can be uniquely  lifted to a $H$-graded morphism $\mcN \to \mcP$. We have a similar universal property for a topological covering of a usual manifold. This also explains the name for the 'covering space'. For instance, a graded covering $\mcP$ is unique up to isomorphism, and any morphism $\phi: \mcN\to \mcM$, where $\mcN$ is a $H$-graded manifold, can be factored through the covering projection $\pp: \mcP \to \mcM$. We expect that important graded $H$-spaces can be interpreted as $H$-coverings of some (super)manifolds.

\bigskip

\textbf{Acknowledgments:} F.S. was supported by FAPEMIG. 
E.V. was partially  supported 
by FAPEMIG, grant APQ-01999-18, FAPEMIG grant RED-00133-21 - Rede Mineira de Matem\'{a}tica, CNPq grant 402320/2023-9. The authors thank Dmitry Timashev for useful discussions and the anonymous referee for helpful comments.

\section{Preliminaries}

\subsection{Characters of finite abelian groups}\label{sec Characters of finite abelian group}
We work over the field of complex numbers $\C$ and for $G=(\Z_2)^n$ over $\R$ or $\C$. Let $G$ be a finite abelian group. Denote by $\Z_{q}$ the cyclic group of order $q\in \mathbb N$.  The structure theorem for finite abelian groups states that every abelian group is isomorphic to a direct product of cyclic
groups. In other words,  
$$
G\simeq \Z_{q_1}\times \cdots \times \Z_{q_t},
$$
where the numbers $q_1, \ldots, q_t$ are powers of prime numbers (not necessarily distinct); see \cite[Chapter 2]{Kurzweil}. For an introduction to the character theory of finite groups, we refer to \cite{Isaacs}.

A group homomorphism $\chi: G\to \C^*=\C\setminus \{0\}$ is called a {\it character of $G$}. If $\chi_1$, $\chi_2$ are two characters of $G$, we can define their product $\chi_1\cdot \chi_2$ by the following formula
$$
[\chi_1\cdot \chi_2] (g) =\chi_1(g)\cdot \chi_2(g), \quad  g\in G.
$$ 
The set of all characters $\operatorname{Ch}(G)$ is an abelian group with respect to the introduced operation.  For a cyclic group $\Z_{q}$ with a generator $u$ we have $\chi(u)^q=1$, so $\chi(u)$ is a root of unity. The group $\operatorname{Ch}(\Z_{q})$ can be identified with the roots of unity group
 $$
 \Z_{q} \simeq \{e^{2\pi i\frac{k}{q}}\,\,|\,\, k=0,1,\ldots ,q-1 \}\subset \C.
 $$
Indeed, for $\Z_{q} = \langle u\rangle$, the map $\chi\mapsto \chi(u)$ defines an isomorphism 
$$ 
\operatorname{Ch}(\Z_{q}) \to \{e^{2\pi i\frac{k}{q}}\,\,|\,\, k=0,1,\ldots, q-1 \}.
$$

We also have $\Z_{q} \simeq \operatorname{Ch}(\Z_{q})$. Furthermore, if $G=G_1\times G_2$, then any character $\chi: G \to \C^*$ is a product of characters $\chi_i:G_i\to \C^*$, where $\chi_i= \chi|_{G_i}$. In other words, 
$$
\operatorname{Ch}(G_1\times G_2) \simeq \operatorname{Ch}(G_1)\times \operatorname{Ch}(G_2).
$$ 
In particular for an abelian group $G$ we have
$$
\operatorname{Ch}(G) = \operatorname{Ch}(\Z_{q_1}\times \cdots \times  \Z_{q_t}) \simeq \operatorname{Ch}(\Z_{q_1})\times \cdots \times  \operatorname{Ch}(\Z_{q_t}) \simeq G.  
$$
This isomorphism is not unique.

\begin{remark}\label{rem Z_2 charcters are real}
    In the case $G=(\Z_2)^n$ any character is pure real. 
\end{remark}

\subsection{$\operatorname{Ch}(G)$-graded vector superspaces}\label{sec G-graded vector superspaces}

 Let $G$ be a finite abelian group and $H:=\operatorname{Ch}(G)$ be its character group equipped with a surjective homomorphism 
 $$
 H\to \mathbb Z_2=\{\bar 0,\bar 1\}, \quad  \chi \mapsto |\chi|.
 $$
  We will call the image of $\chi\in H$ by our homomorphism the parity of $\chi$ and denote this image by $|\chi|$.  
 Now consider any complex representation of $G$ in a non-necessary finite-dimensional vector space $W$. In the case $G=\Z_2^n$ we can assume that $W$ is real. 

 \begin{theorem}\label{theor decomposition superformula}
     There is the following decomposition \begin{equation}\label{eq H-decomposition of V}
 W = \bigoplus_{\chi\in \operatorname{Ch}(G)} W_{\chi},\quad W_{\chi} = \{v\in W\,\,|\,\,  g\cdot v = \chi(g) v \, \}.
 \end{equation}
 \end{theorem}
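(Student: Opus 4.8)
The plan is to prove the decomposition directly through the character projection operators, an approach that has the advantage of working verbatim when $\dim W=\infty$. Denote by $\rho_g$ the invertible linear operator $v\mapsto g\cdot v$ on $W$, and for each $\chi\in\operatorname{Ch}(G)$ introduce
$$
P_{\chi}=\frac{1}{|G|}\sum_{g\in G}\chi(g)^{-1}\rho_g .
$$
This is well defined on any $W$, finite-dimensional or not, since it is a finite $\C$-linear combination of the $\rho_g$; in particular no convergence issue arises. In the case $G=\Z_2^n$ one invokes Remark~\ref{rem Z_2 charcters are real}: there $\chi(g)^{-1}=\chi(g)\in\{\pm1\}$, so $P_\chi$ has real coefficients, preserves the real structure of $W$, and the whole argument runs over $\R$ unchanged.

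First I would establish the two algebraic facts that drive everything, both consequences of the orthogonality relations for characters of the finite abelian group $G$. The first is \emph{equivariance}: a short reindexing $g\mapsto h^{-1}g$ in the defining sum gives $\rho_h\,P_\chi=\chi(h)\,P_\chi$ for every $h\in G$, so that $\rho_h(P_\chi v)=\chi(h)\,P_\chi v$ and hence the image of $P_\chi$ is contained in $W_\chi$; conversely, if $v\in W_\chi$ then $\rho_g v=\chi(g)v$ forces $P_\chi v=v$, so $P_\chi$ restricts to the identity on $W_\chi$ and its image is exactly $W_\chi$. The second is that the $P_\chi$ form a complete system of orthogonal idempotents,
$$
P_\chi P_{\chi'}=\delta_{\chi,\chi'}\,P_\chi,\qquad \sum_{\chi\in\operatorname{Ch}(G)}P_\chi=\id_W .
$$
The idempotent relation follows from the orthogonality $\sum_{g\in G}\chi(g)\chi'(g)^{-1}=|G|\,\delta_{\chi,\chi'}$ after collecting terms over $gh=k$, and the completeness relation follows from the dual orthogonality $\sum_{\chi}\chi(g)=|G|\,\delta_{g,e}$ applied termwise.

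Granting these, the theorem is immediate. For any $v\in W$ the completeness relation writes $v=\sum_{\chi}P_\chi v$ with each summand $P_\chi v\in W_\chi$, so $W=\sum_{\chi}W_\chi$. The sum is direct: if $\sum_{\chi}v_\chi=0$ with $v_\chi\in W_\chi$, then applying $P_{\chi'}$ and using that $P_{\chi'}$ is the identity on $W_{\chi'}$ while it annihilates $W_\chi$ for $\chi\neq\chi'$ (by orthogonality together with the previous paragraph) yields $v_{\chi'}=0$ for every $\chi'$. This is exactly the claimed decomposition \eqref{eq H-decomposition of V}.

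I do not anticipate a serious obstacle; the only point demanding attention is to confirm that nothing in the projector argument secretly uses $\dim W<\infty$, and it does not, precisely because each $P_\chi$ is a \emph{finite} sum of the $\rho_g$. As an alternative one could invoke the structure theorem, writing $G\simeq\Z_{q_1}\times\cdots\times\Z_{q_t}$ and simultaneously diagonalizing the pairwise-commuting operators $\rho_{u_j}$ attached to the generators $u_j$: each satisfies $\rho_{u_j}^{q_j}=\id$, hence is semisimple with eigenvalues among the $q_j$-th roots of unity, and a common eigenvector determines a character via $\chi(u_j)=$ its $\rho_{u_j}$-eigenvalue. I would nonetheless favor the projector proof, since it is shorter and sidesteps any discussion of simultaneous diagonalization of spectral decompositions in the infinite-dimensional case.
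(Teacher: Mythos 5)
Your proof is correct and follows essentially the same route as the paper: your projector $P_{\chi}$ is exactly the paper's averaging formula $v_{\chi}=\frac{1}{|G|}\sum_{g\in G}\chi(g)\,(g^{-1}\cdot v)$ after the reindexing $g\mapsto g^{-1}$, and the completeness relation $\sum_{\chi}P_{\chi}=\id_W$ rests on the same character-sum identity $\sum_{\chi}\chi(g)=|G|\,\delta_{g,e}$ that the paper establishes (via the existence of a separating character $\chi_0$, which you instead cite as standard orthogonality). The only substantive addition is your verification of $P_{\chi}P_{\chi'}=\delta_{\chi,\chi'}P_{\chi}$, which makes the directness of the sum explicit where the paper leaves it implicit.
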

 
 \begin{proof}
If $v\in W$ is any element, then for $v_{\chi}\in W$ defined by the following formula
 \begin{equation}\label{eq H-decomposition of any element v formula}
v_{\chi} = \frac{1}{|G|} \sum_{g\in G} \chi (g) (g^{-1}\cdot v)
\end{equation}
we have $h\cdot v_{\chi} = \chi(h) v$ for any $h\in G$. In other words, $v_{\chi}\in W_{\chi}$. 
Furthermore, we have
$$
v=\sum_{\chi\in H} v_{\chi}.
$$
Indeed,  for any $g\in G\setminus \{e\}$ there exists $\chi_0\in H$ such that $\chi_0(g)\ne 1$. Indeed,  assuming the opposite, we find that there exists $g\in G\setminus \{e\}$ such that for any character $\chi \in H$ we have $\chi(g)=1$. Therefore, 
$$
G \simeq\operatorname{Ch}(G) \simeq \operatorname{Ch}(G/ G')\simeq G/ G',
$$ 
where $G'$ is the non-trivial subgroup generated by $g$.  This is a contradiction.

Hence, we have
\begin{align*}
    \sum_{\chi\in H} \chi(g) = \sum_{\chi\in H} [\chi_0\cdot\chi](g) = \chi_0(g) \sum_{\chi\in H} \chi(g).
\end{align*}
Therefore, $\sum\limits_{\chi\in H} \chi(g) =0$, where $g\ne e$, and clearly $\sum\limits_{\chi\in H} \chi(e) =|G|$. Summing up, 
\begin{align*}
\sum_{\chi\in H} v_{\chi} = \sum_{\chi\in H} \Big(\frac{1}{|G|} \sum_{g\in G} \chi (g) (g^{-1}\cdot v)\Big) = \frac{1}{|G|}  \sum_{g\in G} \Big( \sum_{\chi\in H} \chi (g)  \Big) (g^{-1}\cdot v) =v.
\end{align*}
\end{proof}

 Now we apply the result of Theorem \ref{theor decomposition superformula} to the finite-dimensional case. Consider a finite-dimensional representation $V$ of $G$.  Later we will use $V$ to construct a graded domain, which is assumed to be finite-dimensional.  
 The decomposition (\ref{eq H-decomposition of V}) leads to the following decomposition 
 \begin{align*}
      V = \bigoplus_{\chi\in \operatorname{Ch}(G)} V_{\chi},\quad V_{\chi} = \{v\in V\,\,|\,\,  g\cdot v = \chi(g) v \, \},
 \end{align*}
 and to the following decomposition 
\begin{equation}\label{eq G-graded superspace}
V=V_{\overline{0} } \oplus V_{\overline{1}},\quad V_{\overline{i} } = \bigoplus_{|\chi|=\bar i} V_{\chi}, \quad \bar i\in \{\bar 0,\bar 1\}.
\end{equation}
 An element $v\in V_{\chi}$ is called {\it homogeneous} of weight $\chi$ and of parity $|\chi|$. In more detail, $v\in V$ is called homogeneous of weight $\chi$ if for any $g\in G$ the following holds 
 \begin{equation}\label{eq action on V by characters}
g\cdot v =  \chi(g) v, \quad g\in G.
\end{equation}
Clearly, $\dim V_{\chi} < \infty$ for any $\chi\in H$. 
 
\begin{remark}\label{rem action of G using homogeneous elements}
    If $v=\sum\limits_{\chi\in H} v_{\chi}$, see (\ref{eq H-decomposition of any element v formula}), the action of $G$ on $V$ is also given by the formula (\ref{eq action on V by characters}). In other words,
    $$
    g\cdot v = g\cdot \sum_{\chi\in H} v_{\chi} = \sum_{\chi\in H} \chi(g) v_{\chi}. 
    $$
This means that any complex representation $V$ of $G$ has this form and, in particular, has a homogeneous basis. 
\end{remark}

\begin{example}\label{ex Klein group, definition of K-graded V}
Let $K=\lbrace e, a, b, ab \rbrace$ be the Klein group.  Recall that $K$ is abelian and is defined by the following relations
$$
a^2=b^2=(ab)^2=e.
$$ 
Let us describe the group of characters $H=\operatorname{Ch}(K)$, see Section \ref{sec Characters of finite abelian group}.  Define characters $\chi_{a}, \chi_{b}: K\to \C^*$  by the following formulas
\begin{align*}
\chi_{a} (a)&= -1,\quad \chi_{a} (b) =1;\quad \chi_{b} (a)= 1,\quad \chi_{b} (b) =-1;\\
&\chi_{e} (a) = \chi_{e} (b) =1;\quad \chi_{ab}: = \chi_{a}\cdot \chi_{b}.
\end{align*}

Assume that $|\chi_{a}|=|\chi_{b}|=\bar 1$, $|\chi_{e}|= |\chi_{ab}| =\bar 0$.  Let $V$ be a finite-dimensional $H$-graded vector superspace. That is  
$$
V=V_{\chi_{e}}  \oplus V_{\chi_{a}} \oplus V_{\chi_{b}}\oplus V_{\chi_{ab}},\quad V_{\overline{0} } = V_{\chi_{e}}\oplus V_{\chi_{ab}},\quad  V_{\overline{1} } = V_{\chi_{a}}\oplus V_{\chi_{b}}.
$$
According to Remark \ref{rem action of G using homogeneous elements}, in this case a $K$-action on $V$ is given by Formula (\ref{eq action on V by characters}). More precisely we have
\begin{align*}
g\cdot v_{\chi_{c}}= \chi_{c}(g) v_{\chi_{c}} , \quad g\in K,\quad v_{\chi_{c}}\in V_{\chi_{c}}.
\end{align*}
We see that the map $c\mapsto \chi_c$ defines a homomorphism $K\to \operatorname{Ch}(K)$. Using this homomorphism, $V$ can be regarded as a $G$-graded vector space. However, such a homomorphism is clearly not unique. For example, another homomorphism is defined by $a \mapsto \chi_b$,  $b \mapsto \chi_a$, which leads to another $G$-grading on $V$. 
\end{example}

We can naturally define an action of $G$ on $V^*$ in the following way: 
$$
[g\cdot f](v) := f(g^{-1}\cdot v), \quad f\in V^*.  
$$
The vector space $V^* = \bigoplus\limits_{\chi\in H} V^*_{\chi}$ is $H$-graded. An element $f\in V^*$ is called homogeneous of weight $\chi\in H$, if $g\cdot f = \chi(g)f$ for any $g\in G$.  We can naturally identify  $V^*_{\chi}$ with  $(V_{\chi^{-1}})^*$.

\begin{remark}\label{rem V may be pure real G=Z_2}
    In the case $G=\Z_2^n$ we can consider a real representation $V$ of $G$, since all characters $\chi: G \to \C^*$ are real in this case. 
\end{remark}

\subsection{$\operatorname{Ch}(G)$-decomposition of the sheaf of smooth or holomorphic functions on $V$}\label{sec G-decomposition of F}

Let $G\simeq \Z_{q_1}\times \cdots \times  \Z_{q_t}$ be a finite abelian group that acts in a complex finite-dimensional vector space $V$. As in Section \ref{sec G-graded vector superspaces}, $H= \operatorname{Ch}(G)$ is the dual group of $G$ with fixed homomorphism $H\to \Z_2$, $\chi\mapsto |\chi|$, and $V=V_{\bar 0}\oplus V_{\bar 1}$ is an $H$-graded vector superspace.   Clearly, any homogeneous subspace $V_{\chi}$ is $G$-invariant.  Since $V$ is a vector superspace, we can consider the sheaf of superfunctions on $V$. By definition a superfunction $F$ on $V$ is an element in $\mcF\otimes_{\C} \bigwedge(V^*_{\bar 1})$, where $\mcF$ is the sheaf of smooth {\bf complex-valued} or holomorphic functions on $V_{\bar 0}$. In more detail, since $V_{\bar 0}$ is a complex vector space, we can identify it with $\C^{n}$ for some $n$. Then a smooth function $F: \C^{n} \to \C$ is a complex-valued smooth function on $\C^{n} \simeq \R^{2n}$. The same is true for odd variables: if $\dim_{\C} V^*_{\bar 1}=m$, then a smooth function depends on $2m$ odd variables. In the case $G=\Z_2^n$, we also can consider a real representation $V$ and real-valued even and odd smooth functions.

If $U$ is a $G$-invariant open subset in $V_{\bar 0}$ we define an action on $\mcF(U)\otimes_{\C} \bigwedge(V^*_{\bar 1})$ by
\begin{align*}
g\cdot (f\otimes \zeta) = f\circ g^{-1} \otimes g\cdot \zeta, \quad f\in \mcF(U), \,\,\, \zeta \in    \bigwedge(V^*_{\bar 1}). 
\end{align*}

\begin{definition}\label{def superfunction if homogeneous with weight chi}
	We call a superfunction $F\in \mcF(U)\otimes_{\C} \bigwedge(V^*_{\bar 1})$, where $U$ is a $G$-invariant open set in $V_{\bar 0}$, homogeneous of weight $\chi\in  H=\operatorname{Ch}(G)$ if the following holds
	$$
	g\cdot F(v) =  \chi(g) F(v),\quad v\in V
	$$
 for any $g\in G$. 
\end{definition}

Let $U$ be a $G$-invariant open set in $V_{\bar 0}$ and let $F\in \mcF(U)\otimes_{\C} \bigwedge(V^*_{\bar 1})$. We can decompose $F$ into homogeneous components, see Theorem \ref{theor decomposition superformula}, in the following way
\begin{equation}\label{eq G-decomposition of F}
F=\sum_{\chi\in H} F_{\chi},\quad F_{\chi} = \frac{1}{|G|} \sum_{g\in G} \chi (g) [g^{-1}\cdot F].
\end{equation}
A standard computation shows that 
$$
h\cdot F_{\chi} = \chi(h) F_{\chi}\quad \text{for any $h\in G$}.
$$
The decomposition (\ref{eq G-decomposition of F}) leads to the following decomposition 
\begin{equation}
\mcF(U)\otimes_{\C} \bigwedge(V^*_{\bar 1}) = \bigoplus_{\chi \in H} (\mcF(U)\otimes_{\C} \bigwedge(V^*_{\bar 1}))_{\chi}. 
\end{equation}
Note that we can define such a decomposition only for $G$-invariant sets $U$.  

Definition \ref{def superfunction if homogeneous with weight chi} needs some comments. Indeed, now the parity of a homogeneous element $F\in \mcF(U)\otimes_{\C} \bigwedge(V^*_{\bar 1})$ of weight $\chi$ is induced by the parity of a character $|\chi|$ and by the degree $p$ of $\bigwedge^{p}(V^*_{\bar 1})$. In fact, both definitions coincide. It is sufficient to show the following proposition. 

\begin{proposition}
    Let $F\in \mcF(U)$ and let $\chi\in H$ with $|\chi|=\bar 1$. Then $F_{\chi}=0$. 
\end{proposition}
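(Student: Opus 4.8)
The plan is to exhibit a single element of $G$ whose action on superfunctions is the super-parity operator, and then to feed it into the averaging formula (\ref{eq G-decomposition of F}).

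First I would produce an element $\eps \in G$ that represents the parity homomorphism. Since $G$ is finite abelian, the natural double-duality identification $G \simeq \operatorname{Ch}(H)$ is an isomorphism, so the fixed homomorphism $H \to \Z_2 \hookrightarrow \C^*$, $\chi \mapsto (-1)^{|\chi|}$, is evaluation at a unique $\eps \in G$; that is,
$$
\chi(\eps) = (-1)^{|\chi|} \quad \text{for all } \chi \in H.
$$
Because $\chi(\eps^2) = \chi(\eps)^2 = 1$ for every $\chi$ and characters separate points (as used in the proof of Theorem \ref{theor decomposition superformula}), we get $\eps^2 = e$, so $\eps$ is an involution; note that $\eps = e$ happens exactly in the vacuous case where no $\chi$ has $|\chi| = \bar 1$.

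Next I would identify the action of $\eps$ on $V$ with the parity operator. Indeed, for $v \in V_{\chi}$ we have $\eps \cdot v = \chi(\eps) v = (-1)^{|\chi|} v$, so by the decomposition (\ref{eq G-graded superspace}) the element $\eps$ acts as $\id$ on $V_{\bar 0}$ and as $-\id$ on $V_{\bar 1}$; in particular it fixes $V_{\bar 0}$ pointwise. Since $F \in \mcF(U)$ is a function on $V_{\bar 0}$ alone, it follows that $\eps \cdot F = F \circ \eps^{-1} = F$.

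The conclusion then comes from a change of summation index in (\ref{eq G-decomposition of F}). Replacing $g$ by $\eps g$, which is a bijection of $G$, and using $\chi(\eps g) = \chi(\eps)\chi(g) = -\chi(g)$ together with $(\eps g)^{-1} \cdot F = g^{-1} \cdot (\eps^{-1} \cdot F) = g^{-1} \cdot F$, I would obtain
$$
F_{\chi} = \frac{1}{|G|} \sum_{g \in G} \chi(\eps g)\,[(\eps g)^{-1} \cdot F] = -\frac{1}{|G|} \sum_{g \in G} \chi(g)\,[g^{-1} \cdot F] = -F_{\chi},
$$
whence $2 F_{\chi} = 0$ and therefore $F_{\chi} = 0$, since we work over $\C$ (or over $\R$ when $G = \Z_2^n$). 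The only genuinely substantive step is the first: pinning down $\eps$ via duality and checking that its action fixes the purely even $F$. Everything after that is a formal two-line computation. A duality-free alternative would verify directly that each even coordinate on $V_{\bar 0}$ carries weight of parity $\bar 0$ and then argue monomial by monomial, but that route only settles polynomial $F$ and does not reach general smooth functions; the $\eps$-argument is preferable because it applies uniformly in the smooth and holomorphic cases.
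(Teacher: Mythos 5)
Your proof is correct and is essentially the paper's own argument: your $\eps$ is exactly the nontrivial element $a$ of the subgroup $G_0\simeq \Z_2$ that the paper extracts via the correspondence between exact sequences of groups and of character groups (there $\chi(a)=-1$ for $|\chi|=\bar 1$, $\chi(a)=1$ on $H_0$, so $a\cdot F=F$), and your reindexing $g\mapsto \eps g$ giving $F_{\chi}=-F_{\chi}$ is just a repackaging of the paper's pairwise cancellation over the $G_0$-orbits $\{g_i, g_i a\}$. The only difference is cosmetic --- you produce the involution directly from double duality $G\simeq \operatorname{Ch}(H)$ applied to the parity character $\chi\mapsto(-1)^{|\chi|}$, a somewhat cleaner derivation than the paper's exact-sequence argument, but the key element and the cancellation mechanism are identical.
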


\begin{proof}
We need the following fact from character theory. Let $G_0$ be any subgroup in $G$. Then we get the following two exact sequences of groups
\begin{align*}
 e \to  G_0 \longrightarrow G \longrightarrow G/G_0 \to e, \quad
     e \to  \operatorname{Ch}(G/G_0) \longrightarrow \operatorname{Ch}(G) \longrightarrow \operatorname{Ch}(G_0) \to e.
\end{align*}
And such exact sequences are in bijection. Therefore for the exact sequence of groups of characters
\begin{align*}
    e \to  H_0 \longrightarrow H \longrightarrow \Z_2 \to e
\end{align*}
we can find the corresponding exact sequence of subgroups
\begin{align*}
   e \to  G_0 \longrightarrow G \longrightarrow G/G_0 \to e.
\end{align*}
such that $H_0= \operatorname{Ch}(G/G_0)$ and $\Z_2 \simeq \operatorname{Ch}(G_0)$. We can also describe the subgroup $G_0$  in the following way
 $$
 G_0 = \{ g\in G \,\, |\,\, \chi(g) =1 \,\,\, \text{for any}\,\,\chi\in H_0 \}.
 $$
Since $\operatorname{Ch}(G_0)\simeq G_0$, we have $G_0\simeq \Z_2$. Let $G_0=\{e,a\}$. 

Let us fix $\chi \in H$ with $|\chi|=\bar 1$. Since $|\chi|=\bar 1$, we have $\chi(a)=-1$. (Otherwise, $\chi\in \operatorname{Ch}(G/G_0) = H_0$.) The group $G_0$  acts on $G$. Hence $G$ is divided into $G_0$-orbits $\{g_i, g_i \cdot a\}$ for some $g_i\in G$, which do not intersect. We have 
$$
\chi(g_i) [(g_i)^{-1}\cdot F] + \chi(g_i \cdot a) [(g_i \cdot a)^{-1}\cdot F] = \chi(g_i) [(g_i)^{-1}\cdot F] (1-1) =0.
$$
  Here we used the fact $a \cdot F = F$ since $F$ depends only on coordinates of weight $\chi_0\in H_0$. We see that the sum (\ref{eq G-decomposition of F}) is zero for such $\chi$.  This implies that $F_{\chi}=0$.   
\end{proof}

\begin{remark}\label{rem two ways to count weight of monomial}

If $F_i$ is a homogeneous superfunction of weight $\chi_i$, $i=1,2$, then $F_1\cdot F_2$ is a homogeneous superfunction of weight $\chi_1\cdot \chi_2$. Furthermore, let $(x_i^{h_1})$, where $|h_1| = \bar 0$, and $(\xi_j^{h_2})$, where $|h_2|=\bar 1$, is a basis of $V^*_{h_1}$ and $V^*_{h_2}$, $h_1,h_2\in H$, respectively. By Definition \ref{def superfunction if homogeneous with weight chi}, any $x_i^{h_1}$ is a homogeneous superfunction of weight $h_1\in H$ and $\xi_j^{h_2}$ is a homogeneous superfunction of weight $h_2\in H$. Hence, the weight of a monomial 
	$$
	x_{i_1}^{g_1}\cdots x_{i_s}^{g_s} \cdot \xi_{j_1}^{h_1}\cdots \xi_{j_t}^{h_t}
	$$
	is the product $g_1 \cdots  g_s \cdot h_1 \cdots  h_t\in H$ of weights. 
 \end{remark}

We finish this section with an example. 

\begin{example}{\bf $K$-decomposition of the sheaf of superfunctions.}\label{ex K decomposition of F(U)}
	Consider the case of the Klein group $K$, see Example \ref{ex Klein group, definition of K-graded V}, with a representation of $K$ in $V$. Then $V$ is also a $H$-graded, where $H=\operatorname{Ch(K)}$ and $H$-grading is defined in Theorem \ref{theor decomposition superformula}, see Example \ref{ex Klein group, definition of K-graded V}. Since $K\simeq \Z_2^2$, we can assume that $V$ is real. Let us take $F\in \mcF(U)\otimes_{\R} \bigwedge(V^*_{\bar 1})$, where $U\subset V_{\bar 0}$ is $K$-invariant, and $\mcF$ is the sheaf of real-valued smooth functions in $V_{\bar 0}$.

	By Remark \ref{rem two ways to count weight of monomial}, we have for any $h\in H$
	$$
	[\mcF(U)\otimes_{\R} \bigwedge(V^*_{\bar 1})]_h = \bigoplus_{h_1\cdot h_2= h} \mcF(U)_{h_1} \otimes_{\R} \bigwedge(V^*_{\bar 1})_{h_2}. 
	$$
	Further elements from $\bigwedge(V^*_{\bar 1})$ are Grassmann polynomials, their weights are determined by weights of generators, see Remark \ref{rem two ways to count weight of monomial}. Let us decompose an element $F\in \mcF(U)$ into homogeneous components. We use notations and parity agreement from Example \ref{ex Klein group, definition of K-graded V}.

	Assume that $F(x,y )\in \mcF(U)$, where $x = (x_1, \cdots, x_{p})$ is a basis in $V_{\chi_{e}}$ and $y = (y_1, \cdots, y_{q})$ is a basis in $V_{\chi_{ab}}$. (Recall that both $\chi_{e}$ and $\chi_{ab}$ are assumed to be even.) Applying Formula (\ref{eq G-decomposition of F}), we get
	\begin{align*}
	F_{\chi_{a}} = &\frac{1}{4} \big(\chi_{a}(e) F(e\cdot x,e\cdot y) + \chi_{a}(a) F(a\cdot x,a\cdot y) +\\
	&\chi_{a}(b) F(b\cdot x,b\cdot y) + \chi_{a}(ab) F(ab\cdot x,ab\cdot y)\big)  =\\
	&\frac{1}{4}  \big( F(x,y) -  F(x,-y) + F(x,-y) - F(x,y)\big)=0. 
	\end{align*}
	Here $-x := (-x_1, \cdots, -x_{p})$ and the same notation for $y$. 
	Similarly, we have 
	\begin{align*}
	F_{\chi_{e}}= \frac12 (F(x,y) + F(x,-y)), \quad F_{\chi_{b}} = 0, \quad F_{\chi_{ab}} = \frac12 (F(x,y) - F(x,-y)). 
	\end{align*}
	Summing up,  we got the following decomposition of the superfunction $F$ into homogeneous components: 
	\begin{align*}
	F = \frac12 (F(x,y) + F(x,-y)) + \frac12 (F(x,y) - F(x,-y)). 
	\end{align*}
	This decomposition corresponds to the well-known decomposition of a function into even and odd parts. 
 \end{example}

\subsection{Taylor series of homogeneous components}\label{sec Taylor} We use the notation of Section \ref{sec G-decomposition of F}.
Let us study the Taylor series. We start with the following example. 

\medskip
{\bf Warning.}
Let $V=V_{\bar 0} = V_e\oplus V_{\chi}\oplus V_{\chi^2}$ be a $\Z_3$-graded vector space. (The choice of $\Z_3$ is not essential.) Let $\dim V_{\chi}=1$ and $(x)$ be the basis of $V_{\chi}$. Denote by $(y)$, where $y\in (V_{\chi})^*$ the dual co-basis. That is $y(x)=1$.
We note that one may try to write the Taylor series in following form 
$$
\sin z= z- \frac{z^3}{3!} + \frac{z^5}{5!} + \cdots $$ for $z\in V_{\chi}\simeq \C.$ This is wrong since $z^2$ is not defined and since for $a\in \Z_3\setminus\{e\}$ we get the following wrong equality
$$
\sin (a^{-1} \cdot z) =
a\cdot \sin z  \neq  a\cdot z- \frac{(a\cdot z)^3}{3!} + \frac{(a\cdot z)^5}{5!} + \cdots .
$$

 Consider the function $\sin \circ y : V_{\chi}\to \C$, given by $\sin\circ y(\alpha x) = \sin (\alpha)$, where $\alpha\in \C$. Then we can write the corresponding Taylor series in the following form
\begin{align*}
    \sin\circ y = y- \frac{y^3}{3!} + \frac{y^5}{5!} + \cdots .
\end{align*}
We see that for any $\alpha x$ the value of the function on the right-hand side and the left-hand side is the same.  We also see that the Taylor series of the composition $ \sin\circ y$  has the usual form. 

\medskip

 Choose a homogeneous basis $(x^{h^{-1}}_i)$ of $V_{h^{-1}}$ and the dual co-basis $(y^{h}_j)$ of $V^*_h\simeq(V_{h^{-1}})^*$, where $h\in H_0=\Ker \phi$. Let $F\circ (y^{h}_j)\in \mcF(U)$, where $U$ is a  $G$-invariant open set as above.  For simplicity, let us use the following notation: $z_1, \ldots, z_p$ for $(y^{h}_j)$, where $h\ne e$, and $t_1, \ldots, t_q$ for $(y^{e}_j)$. In addition, we put $t=(t_1, \ldots, t_q)$, $z=(z_1, \ldots, z_p)$, $n=(n_1,\ldots, n_p)$ and $z^n = z_1^{n_1} \cdots z_p^{n_p}$. Since our function is smooth or holomorphic, we can consider its Taylor series with respect to $z$ in a neighborhood of the point $(t, 0)$. We have
 \begin{align*}
     F\circ (t,z) \mapsto T_F(t,z) = \sum a_{n}(t)z^n,
 \end{align*}
where $a_{n}(t)$ are smooth or holomorphic functions in $t$.  Clearly, the $G$-action in $V$ induces the $G$-action on Taylor series.

\begin{remark}
    In the smooth category the Taylor series above is the Taylor series with respect to real variables. For example, for $F(z)$, were the weight of $z =x+iy$ is not $e$, our Taylor series is in fact the Taylor series in real coordinates $(x,y)$. Recall that we also can rewrite this Taylor series with respect to variables $z, \bar z= x-iy$. 
 \end{remark}

\begin{lemma}\label{lem T_(g F) =g T_F} 
We have $g\cdot T_F =  T_{g\cdot F}$ for any $g\in G$. 
\end{lemma}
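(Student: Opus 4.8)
The plan is to reduce the identity to the elementary fact that Taylor expansion in the variables $z$ commutes with a diagonal linear rescaling of those variables, the rescaling being prescribed by the $G$-action on the coordinates. First I would record how $g\in G$ acts on the chosen coordinate functions. Since $z_j$ is homogeneous of weight $h_j\ne e$ and each $t_i$ has weight $e$, the definition of the action on $V^*$ gives $g\cdot z_j = h_j(g)\,z_j$ and $g\cdot t_i = t_i$. Combining $(g\cdot F)(v)=F(g^{-1}\cdot v)$ with the identity $z_j(g^{-1}\cdot v)=(g\cdot z_j)(v)=h_j(g)\,z_j(v)$ then yields the substitution formula
\[
(g\cdot F)(t,z)=F\bigl(t,\,h_1(g)z_1,\ldots,h_p(g)z_p\bigr),
\]
so that acting by $g$ on $F$ is nothing but the diagonal rescaling $z_j\mapsto h_j(g)z_j$ of the arguments, with the $t$-arguments left fixed.

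Next I would evaluate both sides. For $g\cdot T_F$, the induced $G$-action on the formal series $\sum_n a_n(t)z^n$ fixes each coefficient $a_n(t)$, since these depend only on the weight-$e$ variables $t$, and sends each monomial to $g\cdot z^n = w(n)(g)\,z^n$, where $w(n)=\prod_j h_j^{n_j}$ is the weight of $z^n$ computed as in Remark~\ref{rem two ways to count weight of monomial}; hence $g\cdot T_F=\sum_n a_n(t)\,w(n)(g)\,z^n$. For $T_{g\cdot F}$, I would expand the right-hand side of the substitution formula: because $z_j\mapsto h_j(g)z_j$ is linear with the constant coefficient $h_j(g)$, the chain rule gives $\tfrac{1}{n!}\,\partial_z^n\bigl[F(t,h(g)z)\bigr]\big|_{z=0}=w(n)(g)\,a_n(t)$, whence $T_{g\cdot F}=\sum_n a_n(t)\,w(n)(g)\,z^n$ as well. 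The two series coincide term by term, which proves $g\cdot T_F=T_{g\cdot F}$.

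The only point requiring care — and the step I expect to be the main obstacle — is the smooth category, where, as noted in the preceding remark, the Taylor series is taken in real coordinates and may be rewritten in the variables $z_j,\bar z_j$. There the conjugate coordinate $\bar z_j$ also appears and transforms by the conjugate character, $g\cdot\bar z_j=\overline{h_j(g)}\,\bar z_j=h_j(g)^{-1}\bar z_j$ (a root of unity since $G$ is finite), so $g$ still acts by a diagonal rescaling of all Taylor variables. The chain-rule computation above is unaffected by enlarging the list of variables from $z$ to $(z,\bar z)$, as multiplication by a root of unity is still a constant $\R$-linear operation; hence the term-by-term comparison goes through verbatim and the identity holds in both the holomorphic and the smooth settings, the holomorphic case being exactly the computation displayed above.
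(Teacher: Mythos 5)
Your proof is correct and takes essentially the same route as the paper: both arguments first establish the substitution identity $[g\cdot F](t,z)=F(g\cdot t,g\cdot z)=F\bigl(t,h_1(g)z_1,\ldots,h_p(g)z_p\bigr)$ (using $g\cdot t=t$) and then observe that Taylor expansion commutes with this constant diagonal rescaling, so that $T_{g\cdot F}=\sum a_n(t)(g\cdot z)^n=g\cdot T_F$. The only difference is one of detail: you spell out the chain-rule computation identifying the rescaled coefficients $w(n)(g)a_n(t)$ and explicitly handle the smooth category via the conjugate variables $\bar z_j$, steps the paper's proof compresses into a single line.
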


\begin{proof}
    First of all we have
    \begin{align*}
        [g\cdot  F]\circ (t,z) (u,v) = F\circ (t,z) (g^{-1} u,g^{-1} v ) = F\circ (g\cdot t, g\cdot z) (u,v),
    \end{align*}
    for any point $(u,v)$ in a neighborhood of the point $(t, 0)$. This means that 
    $$
    [g\cdot  F]\circ (t,z) = F\circ (g\cdot t, g\cdot z). 
    $$
    Hence, 
    \begin{align*}
        T_{[g\cdot  F]\circ (t,z)}  = T_{F\circ (g\cdot t, g\cdot z)} = \sum a_{n}(t)(g \cdot z)^n = g\cdot T_{F\circ (t,z)}.
    \end{align*}
    Note that $g\cdot t=t$ for any $g\in G$.
\end{proof}

Now let $T_F$ be the Taylor series of $F$. We apply Formula (\ref{eq H-decomposition of any element v formula}) to $T_F$ to get its $H$-decomposition 
\begin{align*}
 T_{F} = \sum_{\chi\in H} (T_{F})_{\chi}, \quad    (T_{F})_{\chi} = \frac{1}{|G|} \sum\limits_{g\in G} \chi (g) \,\, [g^{-1}\cdot T_{F}].
\end{align*}

Lemma \ref{lem T_(g F) =g T_F} implies the following proposition.

\begin{proposition}\label{prop properties of series}
We have for any $\chi\in H$. 
\begin{enumerate}
\item $T_{F_{\chi}} = (T_{F})_{\chi}$.

\item $g\cdot T_{F_{\chi}}  = \chi(g) T_{F_{\chi}} $ for any $g\in G$. 
\item In the notations above, let us have 
$$
T_{F_{\chi}}(t,z) =  \sum a_{n}(t)z^n.
$$
 Then each monomial $z^n = z_1^{n_1} \cdots z_p^{n_p}$ has weight $\chi\in H$. 
\end{enumerate}
\end{proposition}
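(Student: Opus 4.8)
The plan is to deduce all three parts from Lemma~\ref{lem T_(g F) =g T_F} together with the uniqueness of the $H$-decomposition. First I would establish part (1). Applying Lemma~\ref{lem T_(g F) =g T_F} repeatedly, we have $g^{-1}\cdot T_F = T_{g^{-1}\cdot F}$ for every $g\in G$, and since the assignment $F\mapsto T_F$ is linear (the Taylor series of a sum is the sum of Taylor series), I can commute the averaging operator past the Taylor-series functor:
\begin{align*}
(T_F)_{\chi} = \frac{1}{|G|}\sum_{g\in G}\chi(g)\,[g^{-1}\cdot T_F] = \frac{1}{|G|}\sum_{g\in G}\chi(g)\,T_{g^{-1}\cdot F} = T_{\frac{1}{|G|}\sum_{g\in G}\chi(g)\,[g^{-1}\cdot F]} = T_{F_{\chi}},
\end{align*}
where the last equality uses the definition (\ref{eq G-decomposition of F}) of $F_{\chi}$. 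This gives (1).

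Part (2) is then immediate: by Theorem~\ref{theor decomposition superformula} the component $(T_F)_{\chi}$ lies in the weight-$\chi$ summand, so it satisfies $g\cdot (T_F)_{\chi} = \chi(g)(T_F)_{\chi}$ by the very definition of the homogeneous subspace; combining with (1) yields $g\cdot T_{F_{\chi}} = \chi(g)T_{F_{\chi}}$. Alternatively I would note that $F_{\chi}$ is homogeneous of weight $\chi$ (the standard computation cited after (\ref{eq G-decomposition of F})), and apply Lemma~\ref{lem T_(g F) =g T_F} directly to $g\cdot T_{F_{\chi}} = T_{g\cdot F_{\chi}} = T_{\chi(g)F_{\chi}} = \chi(g)T_{F_{\chi}}$, using linearity of $F\mapsto T_F$ once more.

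For part (3), I would compute the $G$-action on the series $T_{F_{\chi}}(t,z) = \sum_n a_n(t)z^n$ explicitly. Since $g\cdot t = t$ for all $g\in G$ (the $t_j$ have weight $e$), the proof of Lemma~\ref{lem T_(g F) =g T_F} shows $g\cdot T_{F_{\chi}} = \sum_n a_n(t)(g\cdot z)^n$. Writing $z_k = y^{h_k}_k$ for the appropriate weights $h_k\in H$, homogeneity of the coordinates gives $g\cdot z_k = h_k(g)z_k$, so that $(g\cdot z)^n = \big(\prod_k h_k(g)^{n_k}\big)z^n$; by Remark~\ref{rem two ways to count weight of monomial} the scalar $\prod_k h_k(g)^{n_k}$ is precisely $\psi_n(g)$, where $\psi_n := \prod_k h_k^{n_k}\in H$ is the weight of the monomial $z^n$. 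Comparing with (2), which says $g\cdot T_{F_{\chi}} = \chi(g)T_{F_{\chi}} = \sum_n \chi(g)a_n(t)z^n$, and equating coefficients of each $z^n$, I obtain $\psi_n(g)a_n(t) = \chi(g)a_n(t)$ for all $g\in G$.

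The main obstacle is the final conclusion that $\psi_n = \chi$ whenever $a_n \neq 0$, rather than merely $\psi_n(g) = \chi(g)$ on the support. The equality $\psi_n(g)a_n(t) = \chi(g)a_n(t)$ only forces $\psi_n(g) = \chi(g)$ at those $t$ where $a_n(t)\neq 0$; but if $a_n$ is not identically zero, there is some $t$ with $a_n(t)\neq 0$, and then $\psi_n(g) = \chi(g)$ holds for all $g\in G$, i.e.\ $\psi_n = \chi$ as characters. Thus every monomial $z^n$ appearing with nonzero coefficient has weight $\chi$, which is the assertion of (3). I would take care to phrase this coefficient-comparison cleanly, noting that distinct multi-indices $n$ give linearly independent monomials $z^n$ so the coefficient extraction is legitimate, and that the characters $\psi_n$ and $\chi$ agreeing on a generating set (equivalently on all of $G$) suffices to conclude equality in $H$.
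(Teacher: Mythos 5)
Your proposal is correct and follows essentially the same route as the paper's proof: part (1) via linearity of $F\mapsto T_F$ combined with Lemma~\ref{lem T_(g F) =g T_F}, part (2) as an immediate consequence, and part (3) by comparing coefficients in the equality $\chi(g)\sum a_n(t)z^n=\sum a_n(t)(g\cdot z)^n$. The only difference is that you make explicit what the paper leaves implicit in (3) --- identifying the scalar $c_n(g)$ with the value $\psi_n(g)$ of the monomial's weight character and restricting the conclusion $\psi_n=\chi$ to multi-indices with $a_n\not\equiv 0$ --- which is a welcome sharpening rather than a departure.
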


\begin{proof}
{\bf (1)} First of all for functions $F_1, F_2$ and a constant $c\in \C$ we have
$$
T_{F_1+F_2} = T_{F_1} + T_{F_2}, \quad  T_{c F} = c T_F.
$$
Hence by Lemma \ref{lem T_(g F) =g T_F} we have 
    \begin{align*}
       T_{F_{\chi}} = \frac{1}{|G|} \sum\limits_{g\in G} \chi (g) \,\, [ T_{g^{-1}\cdot F}] = \frac{1}{|G|} \sum\limits_{g\in G} \chi (g) \,\, [g^{-1}\cdot T_{ F}]= (T_{F})_{\chi}.
    \end{align*}

{\bf (2)} The second statement is a consequence of the first one.

{\bf (3)} We have the following equality of series
$$
\chi(g) T_{F_{\chi}}  = \chi(g) \sum a_{n}(t) z^n   = \sum a_{n}(t)(g \cdot z)^n = \sum a_{n}(t) c_n(g) z^n,
$$
where $c_n(g)\in\C$ is a constant depending on $g$. The equality of series implies $c_n(g) = \chi(g)$. The result follows. 
\end{proof}

\begin{remark}
    Usually in the literature one considers a series as in Proposition \ref{prop properties of series} in homogeneous coordinates $(z_i)$ of weight defined by weights of the homogeneous coordinates. Proposition \ref{prop properties of series} implies that our definition of weights leads to the usual one for series. 
\end{remark}

\section{$H$-graded domains}

\subsection{Structure sheaf of a $H$-graded domain}\label{sec def of a G-domain}

We use the notation of Section \ref{sec G-decomposition of F}. More precisely, $G$ is a finite abelian group, $H=\operatorname{Ch} (H)$ is the dual group of $G$ with fixed parity of elements, $V$ is a complex $H$-graded finite-dimensional vector superspace, see  (\ref{eq G-graded superspace}). In the case of $G=\Z_2^n$ we can also assume that $V$ is real.  

\begin{remark}\label{rem H-gradation in V}
Let the group $G$ act on $V$.  By Theorem \ref{theor decomposition superformula} the representation space $V$ has the form $V= \bigoplus\limits_{\chi\in H} V_{\chi}$, where $V_{\chi}$ is the subspace of homogeneous elements of weight $\chi$. We see that two ways are equivalent: to start with a representation of $G$ in $V$; or to start with a $H$-graded vector superspace $V= \bigoplus\limits_{\chi\in H} V_{\chi}$. Note that if a $H$-decomposition of $V$ is given, the action of $G$ in $V$ is defined by formula (\ref{eq action on V by characters}).
\end{remark}

Denote by $\mcF$ the sheaf of complex-valued smooth or holomorphic functions on $V_{\bar 0}$. (In the smooth case, a complex-valued function $f(z)=u(z) + iv(z)$ depending on a complex variable $z=x+iy$ is a smooth function of variables $x$ and $y$.)  Above we saw that for a $G$-invariant open set $W\subset V_{\bar 0}$ we have the following decomposition
$$
\mcF(W) = \bigoplus_{\chi \in H} \mcF_{\chi}(W).
$$

Now we consider the restriction sheaf $\mcS:=\mcF|_U$ for an open set $U\subset V_e$. Let us first recall the construction of a restriction of a sheaf, see \cite[Section 1, Sheaves]{Hartshorne} for details. Let
$$
\iota: V_e\to V_{\bar 0}, \quad (x_i^e) \mapsto (x_i^e, 0),
$$ be the natural inclusion. The sheaf $\mcS$ is the sheaf associated with the following presheaf
\begin{equation}\label{eq direct limit sheaf}
U \supset V \mapsto \lim\limits_{\iota(V)\subset W} \mcF(W),
\end{equation}
where $V\subset U$ is open. 
That is, $\mcS$ is the inverse image sheaf $\iota^{-1} (\mcF)$ of $\mcF$. 

\begin{remark}
  Let $F_i\in \mcF(W_i)$, where $i=1,2$ and $W_i$ are open sets in $V_{\bar 0}$ with $\iota(V)\subset W_i$.  The functions $F_1$ and $F_2$ are called equivalent if there exists an open set $W$ in $V_{\bar 0}$ with $\iota(V)\subset W$ such that $F_1|_{W}= F_2|_{W}$. 
  An element $f$ of the presheaf $\lim\limits_{\iota(V)\subset W} \mcF(W)$ is a class of equivalent functions, and  $\mcS$ is the corresponding sheaf. 
\end{remark}

\begin{example}\label{ex stalk}
    Let $\dim V_e =0$. Then the sheaf (\ref{eq direct limit sheaf}) is a stalk $\mcF_{p}$, where $p=0\in V_e$. (See \cite[Section 1, Sheaves]{Hartshorne}, definition of a stalk, for details.)
\end{example}

The stalk $\mcS_v$, where $v\in U$, is just the stalk $\mcF_{(v,0)}$, see \cite[Section 1, Sheaves]{Hartshorne}. This implies that $\mcS$ is a sheaf of local algebras. Note that we can always assume that $W$ is $G$-invariant. In fact, for any open set $W'$ such that $\iota(V)\subset W'$ we can find a $G$-invariant open set $W=\bigcap_{g\in G} g\cdot W'$ such that $\iota(V) \subset W\subset W'$. Hence, the inverse image sheaf over open sets $W'$ and over $G$-invariant open sets $W$ are the same. Hence, the sheaf $\mcS$ is $H$-graded.

The construction of $\mcS= \mcF|_U$ implies that any section $s\in \mcS(V)$ is defined in a neighborhood $V_v$ of any $v\in V$ by a smooth or holomorphic function $f_{W(V_v)}:W(V_v) \to \mathbb C$, where $W(V_v)$ such that $\iota(V_v) \subset W(V_v)\subset V_{\bar 0}$ is an open neighborhood of the set $V_x$. Clearly, $\{ W(V_v)\}_{v\in V}$ is an open cover of the set $\iota(V)$. We will call the set $\{ f_{W(V_v)}\}$ a {\bf representative data} of the section $s$. If $V_{v_1}\cap V_{v_2}\ne \emptyset$, then there exists an open in $V_{\bar 0}$ set $W\supset V_{v_1}\cap V_{v_2}$ such that 
\begin{equation}\label{eq representatin data}
  f_{W(V_{v_1})}|_W = f_{W(V_{v_2})}|_W.  
\end{equation}
In fact, the functions $f_{W(V_{v_i})}|_W$, $i=1,2$, are both representatives of $s|_{V_{v_1}\cap V_{v_2}}$. Conversely,  the set of functions $\{ f_{W(V_v)}\}$ satisfying (\ref{eq representatin data}) defines a section $s\in \mcS(V)$.

 We can define the value of $s_v\in \mcS_v$ at $v\in U$ by
\begin{equation}\label{eq value of a germ}
s_v(v) = f_{(v,0)} (v,0),
\end{equation}
where $f\in \mcF$ is a local representative.

 We define the following sheaf on $U$ by 
$$
\mcO:= \mcS \otimes_{\C}\bigwedge(V^*_{\bar 1}). 
$$ 
In other words an element $F\in \mcO(V)$ is a Grassmann polynomial with coefficients from $\mcS(V)$. Since $\mcS$ and $\bigwedge(V^*_{\bar 1})$ are naturally $H$-graded, the sheaf $\mcO$ is also $H$-graded. Clearly, the sheaf $\mcO$ is the sheaf of local superalgebras, since $\mcS$ is the sheaf of local algebras. 

Let $V=V_{\bar 0} \oplus V_{\bar 1}  = \bigoplus_{\chi\in H} V_{\chi}$ be an $H$-graded vector space, see Remark \ref{rem H-gradation in V}. Denote $n_{\chi}:= \dim V_{\chi}$.

\begin{definition}\label{def graded domain}
	A $H$-graded domain of dimension $(n_{\chi})_{\chi\in H}$ associated to the $H$-graded superspace $V$ is the locally ringed space $\mcU=(U,\mcO)$, where $U\subset V_e$ is open and $\mcO:= \mcS \otimes_{\C}\bigwedge(V^*_{\bar 1})$. If $(x^g_p,\xi^h_q)$, where $g,h\in H$ and $|g|=\bar 0$ and $|h|=\bar 1$, is a $H$-homogeneous basis of $V^*$, then we call the set $(x^g_p|_{U},\xi^h_q|_{U})$ local graded coordinates of $\mcU$. 
\end{definition}
To simplify the notation, we will write only $(x^g_p,\xi^h_q)$ for local graded coordinates of $\mcU$.

\begin{remark}\label{rem function with 0 Taylor}
	Usually in the literature one considers another structure sheaf of a graded domain, see \cite{Fei,KotovSalnikov,Ji,Jubin}. Let us illustrate the difference. Consider the following function
	$$
	f(x) = \left\{ 
	\begin{array}{ll}
	e^{-\frac{1}{x^2}},& x\ne 0;\\
	0, &x=0.
	\end{array}
	\right. 
	$$
	It is well known that this function has the zero Taylor series centered in $0$. Clearly, this Taylor series does not converge to $f(x)$ in any neighborhood of $0$. 
	
	Now consider a $\Z_2$-graded pure even vector space $V=V_0\oplus V_{1}$, where $V_0\simeq \mathbb C$ and $V_{1}\simeq \mathbb C$. Denote by $z=x+i y$ a  graded coordinate in  $V_1$. Then, according to our definition of the structure sheaf of a graded domain, the class $\overline{f}$ of the function $f(x)$ is not equal to $0$. In fact, $f(x)$ is not equal to $0$ in any open set that contains an open set $U\subset V_0$. If we try to decompose this function into a series around $0$ we will get the trivial series as was mentioned above. 
\end{remark}

\begin{remark}
	Let us discuss another difference between our definition of the structure sheaf of a $H$-graded domain and the definition that is usually used in the literature; see \cite{Fei,KotovSalnikov,Ji,Jubin}. Consider a (super)domain $\mcU = \mathbb C^{1|0}$. Let $(z=x+i y)$ be an even $H$-homogeneous local coordinate of a certain $H$-graded domain $\mcN$ of weight $h\ne e$. Consider the following morphism $\phi:\mcN\to \mcU$ of $\Z_2$-graded domains:
	$$
	\phi^*(w) = z,
	$$
	where $w=w_1+iw_2$ is a local coordinate in $\mcU$. The function $f(w_1)$ defined in Remark \ref{rem function with 0 Taylor} is an element of the structure sheaf of $\mcU$. Then it is natural to assume that 
	$$
\phi^*(f) = f.
	$$ 
	
	If we use approach of other authors, we need to decompose $f(x)$ into the Taylor series at $0$ and we get the following expression
	$$
	f(0) + f'(0) x + \frac12 f''(0)x^2 +  \cdots = 0. 
	$$
	We see that if we consider formal power series in coordinates of weights $h\ne e$, the image of a  set of smooth functions is trivial, even if the morphism is close to the identity. Clearly in the case of other authors, one may lose partially information about the structure of smooth functions. This also may happen for morphisms of $H$-graded domains or $H$-graded manifolds. 
\end{remark}

\begin{remark}
	Using our sheaf $\mcO$ we can construct a sheaf $\mcO'$, which is a structure sheaf of a graded domain in the sense used by another authors. We just decompose any $F\in \mcO(V)$ in a series with respect to the coordinates of the weights $h\ne e$. Note that by Borel's theorem, any formal series is equal to the Taylor series of a certain function. Our idea is to avoid using such series in the smooth case.  
\end{remark}

\begin{proposition}\label{prop F^g(u)=0}
    Let $\mcU=(U,\mcO)$ be a $H$-graded domain, and $F^{\chi}\in \mcO_{\chi}$, where $\chi\ne e$. Then, $F^{\chi} (u) =0$ for any $u\in U$.
\end{proposition}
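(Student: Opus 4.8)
The plan is to reduce the whole statement to the behaviour of the evaluation functional $F\mapsto F(u)$ under the $G$-action, exploiting the fact that the point $\iota(u)=(u,0)$ is fixed by $G$ precisely because $u$ lies in the weight-$e$ subspace $V_e$. The homogeneity relation $g\cdot F^{\chi}=\chi(g)F^{\chi}$ will then pit the scalar $\chi(g)$ against the fixed-point invariance of the value, and for $\chi\ne e$ this can only be reconciled by the value being zero.

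First I would pin down what $F^{\chi}(u)$ means. By (\ref{eq value of a germ}) the value of a section at $u\in U$ is read off from a local representative: writing such a representative superfunction as $F=\sum_{I}f_{I}\otimes\xi^{I}$ with $f_{I}\in\mcF$ and the $\xi^{I}$ ranging over monomials in the odd generators, the value $F(u)$ is the value at $(u,0)\in V_{\bar 0}$ of the body $f_{\emptyset}$, i.e.\ of the coefficient of $\xi$-degree zero. Thus $F\mapsto F(u)$ factors as projection onto the $\xi$-degree-zero part followed by the evaluation (\ref{eq value of a germ}).

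Second, I would establish the equivariance identity $(g\cdot F)(u)=F\big(g^{-1}\cdot(u,0)\big)$ for all $g\in G$. Using the action $g\cdot(f\otimes\zeta)=f\circ g^{-1}\otimes g\cdot\zeta$ together with the fact that $G$ acts linearly on $V^{*}_{\bar 1}$ and hence preserves $\xi$-degree, the $\xi$-degree-zero part of $g\cdot F$ is exactly $f_{\emptyset}\circ g^{-1}$; evaluating at $(u,0)$ yields $f_{\emptyset}\big(g^{-1}\cdot(u,0)\big)$, which is the asserted identity. Now $(u,0)$ is a $G$-fixed point: since $u\in V_{e}$ we have $g\cdot u=e(g)\,u=u$ by (\ref{eq action on V by characters}), and $g\cdot 0=0$. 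Therefore $(g\cdot F)(u)=F(u)$ for every $g\in G$.

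Finally I would specialise to $F^{\chi}$. By homogeneity $g\cdot F^{\chi}=\chi(g)F^{\chi}$, so evaluating at $u$ and combining with the previous step gives $F^{\chi}(u)=(g\cdot F^{\chi})(u)=\chi(g)F^{\chi}(u)$ for all $g\in G$. Since $\chi\ne e$ is a nontrivial character, there is some $g_{0}\in G$ with $\chi(g_{0})\ne 1$, and then $\big(1-\chi(g_{0})\big)F^{\chi}(u)=0$ forces $F^{\chi}(u)=0$, as claimed. I expect the only delicate points to be the bookkeeping in the first two steps, namely making the value precise via representative data and checking that the $G$-action commutes with passing to the body; the concluding character-theoretic step is immediate, and the essential geometric input is simply that $U\subset V_{e}$ consists of $G$-fixed points.
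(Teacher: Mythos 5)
Your proof is correct and follows essentially the same route as the paper's: reduce to the $\xi$-degree-zero part (the paper's ``without loss of generality, $F^{\chi}\in\mcS_{\chi}$''), use that $(u,0)$ is a $G$-fixed point since $u\in V_e$ to get $(g\cdot F^{\chi})(u)=F^{\chi}(u)$ for a local representative, and then pit this against homogeneity $\chi(g)F^{\chi}(u)=F^{\chi}(u)$ with some $\chi(g)\ne 1$. Your write-up is in fact slightly more careful than the paper's, which leaves the reduction to the body and the degree-preservation of the $G$-action implicit.
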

\begin{proof}
   Without loss of generality, we may assume that $F^{\chi}\in \mcS^{\chi}$. Then for any local representative $F^{\chi}_{\lambda}$ of $F^{\chi}$, we have $$
   \chi(g) F^{\chi}_{\lambda} (u,0) = [g\cdot F^{\chi}_{\lambda}] (u,0) = 
    F^{\chi}_{\lambda} (g^{-1} \cdot u,g^{-1} \cdot 0) = F^{\chi}_{\lambda}(u,0).
   $$
   Hence, $\chi(g)\ne 0$ for some $g$, we have $F^{\chi}_{\lambda}(u,0)=0$. 
\end{proof}

\subsection{Morphisms of $H$-graded domains}\label{sec morphisms of H-graded domains}
Let  $\mcU:=(U,\mcO)$, where $U\subset V_e$ is open,   be a $H$-graded domain corresponding to a representation of  $G$ in $V$. Recall that
$$
\mcO = \mcF|_{U} \otimes_{\C} \bigwedge(V^*_{\bar 1}),
$$
where $\mcF$ is a sheaf of smooth or holomorphic functions on $V_{\bar 0}$, and $\mcF|_{U} = \iota^{-1}(\mcF)$ is the inverse sheaf $\mcF$, where $\iota: U\to V_{\bar 0}$ is the natural inclusion. Note that 
\begin{align*}
\iota^{-1}(\mcF) \otimes_{\C} \bigwedge(V^*_{\bar 1}) = \iota^{-1}(\mcF \otimes_{\C} \bigwedge(V^*_{\bar 1})).
\end{align*}

  We saw that the structure sheaf $\mcO$ is a sheaf of local superalgebras. In fact, for any $v\in U$ the unique maximal ideal $\mathfrak m_v\subset \mcO_v$ is given by
$$
\mathfrak m_v = \{ s\in \mcO_v\,\,|\,\, s(v)=0\},
$$
see (\ref{eq value of a germ}).

\begin{remark}
For superdomains, the following statement is true; see \cite{Leites}. If we have two functions $F_1, F_2 \in \mcO_{\mcV}$ and for any maximal ideal $\mathfrak m_x\in \mcO_{\mcV}$ we have 
$$
F_1 = F_2 \mod \mathfrak m_x^p
$$
for any $x\in \mcV_0$ and any $p\geq 0$, then $\F_1 = \F_2$. This statement is false for our $H$-graded domains.

	 Let $G=K$ and let $V_e=0$, $V_{ab}\simeq \mathbb R$ and $x\in V_{ab}^*$ be a local coordinate. Consider two functions
	\begin{align*}
	\F_1=0,\quad \F_2 = f(x),
	\end{align*}
    where $f(x)$ is from Remark \ref{rem function with 0 Taylor}.
 Clearly, $F_1 = F_2 \mod \mathfrak m_0^p$, where $\mathfrak m_0\in (\mcO_{\mcV})_0$ is the unique maximal ideal. 
\end{remark}

Now, let us take two $H$-graded domains $\mcU=(U,\mcO)$ and $\mcU':=(U',\mcO')$ associated with the $H$-graded vector superspaces  $V$ and $V'$, respectively. We define a morphism of $H$-graded domains
$$
\Phi=(\Phi_0, \Phi^*): \mcU\to \mcU'
$$ 
of type I, where $\Phi_0:U\to U'$ is a smooth or holomorphic map,  as a ringed spaces morphism that preserves the $H$-grading of the structure sheaves and satisfies the following condition.
There exist open sets $W\subset V_{\bar 0}$ and $W'\subset V'_{\bar 0}$, containing  $U$  and $U'$, respectively,  and a morphism of superdomains:
$$
s= (s_0, s^*) : \mcW \to \mcW',
$$ 
where 
$$
\mcW= (W, \mcF_W \otimes_{\C}\bigwedge(V^*_{\bar 1})), \quad \mcW'= (W', \mcF_{W' }\otimes_{\C}\bigwedge(V'^*_{\bar 1}))
$$ 
such that the following diagram is commutative
\begin{equation}\label{eq morphism type II, diagram}
\begin{tikzcd}
\mcU \arrow{r}{\Phi} \arrow[swap]{d}{\iota} & \mcU' \arrow{d}{\iota'} \\
\mcW \arrow{r}{s} & \mcW'
\end{tikzcd}   
\end{equation}
Here 
$
\iota:\mcU \to \mcW$,  $\iota':\mcU' \to \mcW'
$
are the natural embeddings.

Let $(y^g_i,\eta^h_j)$ and $(x^g_p,\xi^h_q)$, where $|g|=\bar 0$ and $|h|=\bar 1$, be local $H$-graded coordinates in $V'$ and $V$, respectively.  Since any morphism of superdomains is determined by the image of local coordinates, see \cite{Leites}, we have
$$
s^*(F(y^g_i,\eta^h_j)) = F(s^*(y^g_i), s^*(\eta^h_j))), \quad F\in \mcO_{\mcW'}.
$$
This implies that $\Phi$ is defined by the image of local coordinates. Indeed, if a local section $f\in \mcO'$ has a representative $F\in \mcO_{\mcW'}$, that is $f = (\iota')^*(F)$, then the commutativity of our diagram implies that 
$$
\Phi^*(f) = \Phi^*(\iota'^*(F)) = \iota^* \circ  s^*(F).
$$
 Summing up if $f = \iota^*(F)$, we have
\begin{equation}\label{eq G domain form local morphism 1}
\Phi^*(f) =  \iota^*(F(s^*(y^g_i), s^*(\eta^h_j))). 
\end{equation}

\begin{remark}
	Formula (\ref{eq G domain form local morphism 1}) is independent on the choice of a representative $F$. Indeed, if $F'$ is another representative of $f$. That is $f = \iota^*(F) = \iota^*(F')$. The commutativity of our diagram implies that  
	$$
	\Phi^*(f) = \iota^*(F(s^*(y^g_i), s^*(\eta^h_j))) =  \iota^*(F'(s^*(y^g_i), s^*(\eta^h_j))).
	$$
\end{remark}

Now assume that $f$ does not have a representative $F$. By our construction $f$ has local representatives $F_{\lambda}$, $\lambda\in \Lambda$. Hence, locally, we have 
$$
\Phi^*(f) = \iota^*(F_{\lambda}(s^*(y^g_i), s^*(\eta^h_j))) .
$$
The corresponding classes
 $$
 \iota^*(F_{\lambda}(s^*(y^g_i), s^*(\eta^h_j))),\quad \iota^*(F_{\mu}(s^*(y^g_i), s^*(\eta^h_j)))
 $$  
 and $\Phi^*(f)$ coincide in the intersection of their domains by commutativity of our diagram. We see that in this case $\Phi^*(f)$ is also defined by images of local coordinates.

\begin{definition}
   We define a morphism $
\Phi=(\Phi_0, \Phi^*): \mcU\to \mcU'$ of $H$-graded domains
of type II, or just a morphism of $H$-graded domains, as a ringed space morphism preserving the $H$-grading of the structure sheaves such that the morphism $\Phi$ locally coincides with a morphism of $H$-graded domains of type I. 
\end{definition}

\begin{lemma}
    Let $\Phi_1: \mcU\to \mcU'$ and $\Phi_2: \mcU'\to \mcU''$, be two morphisms of $H$-graded domains. Then the composition $\Phi_2\circ \Phi_1$ is also a morphism of $H$-graded domains. 
\end{lemma}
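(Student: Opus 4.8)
\emph{Proof proposal.} The plan is to check, for the composite $\Phi:=\Phi_2\circ\Phi_1$, the three defining features of a (type II) morphism of $H$-graded domains: that it is a morphism of ringed spaces, that it preserves the $H$-grading, and that it locally coincides with a type I morphism. The first is automatic, since the composite of two morphisms of ringed spaces is again one. For the second I would write $\Phi^*=\Phi_1^*\circ\Phi_2^*$ and argue weight by weight: if $f\in\mcO''$ is homogeneous of weight $\chi\in H$, then $\Phi_2^*(f)\in\mcO'$ has weight $\chi$ because $\Phi_2$ preserves the grading, and then $\Phi_1^*(\Phi_2^*(f))\in\mcO$ again has weight $\chi$ because $\Phi_1$ does. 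Hence $\Phi^*$ is grading-preserving, and the whole content of the lemma lies in the local statement.

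To establish that, fix a point $u\in U$ and put $u':=\Phi_{1,0}(u)\in U'$. Since $\Phi_1$ is a morphism of $H$-graded domains, after shrinking there is a neighborhood $U_1\ni u$ on which $\Phi_1$ coincides with a type I morphism having a superdomain lift $s_1=((s_1)_0,s_1^*):\mcW_1\to\mcW_1'$, where $W_1\subset V_{\bar 0}$ contains $U_1$ and $W_1'\subset V'_{\bar 0}$ is an open set containing $\Phi_{1,0}(U_1)$. Likewise there is a neighborhood $U_2'\ni u'$ on which $\Phi_2$ coincides with a type I morphism with lift $s_2=((s_2)_0,s_2^*):\mcW_2'\to\mcW_2''$, where $W_2'\subset V'_{\bar 0}$ contains $U_2'$ and $W_2''\subset V''_{\bar 0}$ contains $U''$.

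Next I would arrange that the target superdomain of $s_1$ lies inside the source superdomain of $s_2$. This is the one genuine technical point, and the main obstacle: the ambient even open sets $W_1'$ and $W_2'$ are a priori unrelated. Both contain $u'$, so $W':=W_1'\cap W_2'$ is an open neighborhood of $u'$; since $(s_1)_0$ is continuous with $(s_1)_0(u)=u'\in W'$, there is an open neighborhood $W\subset W_1$ of $u$ with $(s_1)_0(W)\subset W'$. Restricting $s_1$ to $\mcW\to\mcW'$ and $s_2$ to $\mcW'\to\mcW_2''$ (the restriction of a superdomain morphism to open subsets is again a superdomain morphism), the composite $s_2\circ s_1:\mcW\to\mcW_2''$ is a well-defined morphism of superdomains, because morphisms of superdomains compose \cite{Leites}.

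Finally, set $\tilde U:=U_1\cap\Phi_{1,0}^{-1}(U_2')\cap W$, an open neighborhood of $u$ in $U$. On $\tilde U$ the commuting squares defining the type I structure of $\Phi_1$ and of $\Phi_2$ read $\iota'\circ\Phi_1=s_1\circ\iota$ and $\iota''\circ\Phi_2=s_2\circ\iota'$, so that
\begin{align*}
\iota''\circ(\Phi_2\circ\Phi_1) &= (\iota''\circ\Phi_2)\circ\Phi_1 = (s_2\circ\iota')\circ\Phi_1 \\
&= s_2\circ(\iota'\circ\Phi_1) = s_2\circ(s_1\circ\iota) = (s_2\circ s_1)\circ\iota.
\end{align*}
This is exactly the commuting square exhibiting $\Phi|_{\tilde U}$ as a type I morphism with lift $s_2\circ s_1$. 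As $u\in U$ was arbitrary, $\Phi$ locally coincides with a type I morphism and is therefore a morphism of $H$-graded domains. Once the matching of ambient even open sets is secured by the continuity-and-restriction argument above, everything else reduces to the standard composition of superdomain morphisms, so I expect no further difficulty.
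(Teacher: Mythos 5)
Your proof is correct and takes essentially the same route as the paper: the paper's own proof consists precisely of stacking the two locally commutative squares to exhibit $s'\circ s$ (your $s_2\circ s_1$) as a local superdomain lift of $\Phi_2\circ\Phi_1$. Your additional care in shrinking to $W_1'\cap W_2'$ and restricting $s_1$ so that the two lifts actually compose fills in the one technical detail that the paper suppresses under ``commutative locally (we use obvious notations)''.
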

 
\begin{proof}
For the morphisms $\Phi_1,\Phi_2$ the diagram (\ref{eq morphism type II, diagram}) is commutative locally. Hence the following diagram
    $$
    \begin{tikzcd}
\mcU \arrow{r}{\Phi_1} \arrow[swap]{d}{\iota} & \mcU' \arrow{r}{\Phi_2}\arrow{d}{\iota'}&  \mcU''\arrow{d}{\iota''}\\
\mcW \arrow{r}{s} & \mcW' \arrow{r}{s'}& \mcW''
\end{tikzcd}   
$$
is also commutative locally (we use obvious notations in the diagram). The result follows. 
\end{proof}

In fact, we have the following important proposition. 

\begin{proposition}\label{prop shapira}
    Let $\mcU=(U, \mcO)$ be  a $H$-graded domain associated to a $H$-graded vector space $V$. Then the natural map
    \begin{equation}
      \Gamma:  \lim\limits_{\iota(U)\subset W}\mcF(W) \to \mcS(U)
    \end{equation}
    is a bijection. 
\end{proposition}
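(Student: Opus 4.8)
The plan is to reduce the claim to the classical computation of the sections of an inverse image sheaf along a closed subspace that admits a fundamental system of paracompact neighbourhoods (as in the books of Godement and of Kashiwara--Schapira). The only point needing care is that $\iota(U)=U\times\{0\}$ is not closed in $V_{\bar 0}$, so I first make it closed using the grading. Write $V_{\bar 0}=V_e\oplus V'$ with $V'=\bigoplus_{e\ne\chi,\,|\chi|=\bar 0}V_{\chi}$, let $\pi\colon V_{\bar 0}\to V_e$ be the linear projection (so $\pi\circ\iota=\id$), and set $\Omega:=\pi^{-1}(U)=U\times V'$, an open subset of $V_{\bar 0}$. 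Then $\iota(U)=U\times\{0\}$ is closed in $\Omega$, and since $\Omega$ is an open subset of the finite-dimensional (hence metrizable) space $V_{\bar 0}$ it is paracompact and normal, with the cylinders $U\times B$ (for $B$ a ball about $0$ in $V'$) forming a fundamental system of metrizable neighbourhoods of $\iota(U)$. First I would note that the colimit defining $\Gamma$ is unchanged upon restricting to neighbourhoods lying in $\Omega$: for any open $W\supset\iota(U)$ the intersection $W\cap\Omega$ is again such a neighbourhood and is contained in $W$, so this subsystem is cofinal. As $\mcF(W)=(\mcF|_{\Omega})(W)$ for $W\subset\Omega$ and $\iota^{-1}\mcF=\iota^{-1}(\mcF|_{\Omega})$, it suffices to treat $\iota(U)$ as a closed subset of $\Omega$.

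Injectivity is immediate from the description $\mcS_v=\mcF_{(v,0)}$ of the stalks. If $F\in\mcF(W)$ represents $0\in\mcS(U)$, then the germ of $F$ at $\iota(v)$ vanishes for every $v\in U$, so $F$ vanishes on an open neighbourhood of each $\iota(v)$; the union of these is an open set $W''$ with $\iota(U)\subset W''\subset W$ on which $F\equiv 0$, whence $[F]=0$ already in the colimit.

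The substance is surjectivity. A section $s\in\mcS(U)$ is, by the very definition of the inverse image sheaf, locally represented: every $a\in\iota(U)$ has an open neighbourhood $W_a\subset\Omega$ and a function $F_a\in\mcF(W_a)$ inducing the prescribed germ of $s$ at each point of $\iota(U)\cap W_a$. Using paracompactness I would pass to a locally finite refinement $\{W_i\}$, with representatives $F_i\in\mcF(W_i)$, arranged so that each $W_i\cap\iota(U)$ lies in one original patch and $\{W_i\}$ is locally finite along $\iota(U)$. On an overlap, $F_i$ and $F_j$ induce the same germ of $s$ at every point of $\iota(U)\cap W_i\cap W_j$, hence agree on some open $N_{ij}\subset\Omega$ containing that set. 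The decisive step is a shrinking argument: using local finiteness together with the normality of $\Omega$, I would replace each $W_i$ by a smaller open $W_i'$ still covering $\iota(U)$ and satisfying $W_i'\cap W_j'\subset N_{ij}$ for all $i,j$. Then $F_i$ and $F_j$ coincide on $W_i'\cap W_j'$, so the gluing axiom for the sheaf $\mcF$ assembles them into a single $F\in\mcF(W)$ on $W:=\bigcup_i W_i'\supset\iota(U)$, whose class maps to $s$.

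I expect this shrinking step to be the main obstacle, and I want to stress that it is exactly what makes the argument go through uniformly in the smooth and the holomorphic settings. No partition of unity is used anywhere: once the local pieces are forced to agree \emph{exactly} on overlaps, only the gluing axiom of $\mcF$ is invoked. Consequently neither softness of $\mcF$ nor any Stein-type hypothesis on $U$ is required, and the single argument covers both categories at once; the edge case $V'=0$ (no nontrivial even characters) is trivial, since then $\Omega=U$ and $\iota(U)=U$ is already open and closed.
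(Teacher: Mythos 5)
Your overall architecture is the same as the paper's, which likewise follows the Kashiwara--Schapira pattern: the identical stalk argument for injectivity, and, for surjectivity, a locally finite system of representatives $F_i\in\mcF(W_i)$ that must be made to agree \emph{exactly} before the gluing axiom of $\mcF$ can be invoked. Your preliminary reduction to the cylinder $\Omega=U\times V'$ (where $\iota(U)$ is genuinely closed, with the cofinality remark) is a careful point that the paper leaves implicit when it says ``without loss of generality the covering is locally finite''. However, the step you yourself flag as decisive --- replacing each $W_i$ by $W_i'$ still covering $\iota(U)$ with $W_i'\cap W_j'\subset N_{ij}$ --- is asserted, not proved, and ``local finiteness together with normality'' does not deliver it as stated. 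The difficulty sits at boundary points $z\in\iota(U)\cap W_i\cap(\overline{W}_j\setminus W_j)$: there the germ of $F_j$ at $z$ is not even defined, $z$ need not lie in $N_{ij}$, and $N_{ij}$ may pinch to zero thickness as one approaches $z$. In particular the natural candidate $W_i'=W_i\setminus\bigcup_j(\overline{W}_j\setminus N_{ij})$, which is open by local finiteness, fails to contain exactly such points of $\iota(U)\cap W_i$, so the covering property is lost; some further device is genuinely needed.

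The paper's device is calibrated precisely to this problem: it first pre-shrinks, choosing a locally finite refinement $\{W_i\}$ with $\overline{W}_i\subset W_i'$ while the representatives live on the larger sets $W_i'$, so that germs of all relevant representatives are defined on the closures; it then forms the agreement locus $R=\{x:\ (F_i)_x=(F_j)_x \ \text{whenever}\ x\in\overline{W}_i\cap\overline{W}_j\}$, proves $R$ is open using local finiteness, checks $\iota(U)\subset R$ (this is where the closures matter), and glues the restrictions $F_i|_{W_i\cap R}$. Your shrinking lemma is in fact true, but its proof requires the same pre-shrinking idea plus a metric argument (available here since $V_{\bar 0}$ is finite-dimensional, hence $\Omega$ is metrizable): take $\overline{V}_i\subset W_i$ still covering $\iota(U)$; for $z\in\iota(U)$ set $I(z)=\{i:\ z\in\overline{V}_i\}$ and choose $r_z>0$ so that $B(z,3r_z)$ avoids every $\overline{V}_j$ with $j\notin I(z)$ and lies in $N_{ij}$ for all $i,j\in I(z)$ (legitimate because $i\in I(z)$ forces $z\in W_i$); then $W_i':=\bigcup\{B(z,r_z):\ i\in I(z)\}$ works, by comparing radii on an overlap ($r_z\geq r_{z'}$ gives $B(z',r_{z'})\subset B(z,3r_z)$, whence $j\in I(z)$ and $B(z,3r_z)\subset N_{ij}$). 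So your proof is completable and not structurally different from the paper's, but as written the decisive step conceals exactly the boundary-germ subtlety that the paper's $R$-construction (and the pre-shrinking $\overline{W}_i\subset W_i'$) is designed to resolve.
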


\begin{proof}
We use the idea of \cite[Section: Sheaves on locally compact spaces]{KaS}. For injectivity, assume that $\Gamma(f)=0$. Hence, the stalk $f_x=0$ for any $x\in U$. (We use $\mcF_x=\mcS_x$, $x \in U$.) Therefore,  a representative of $f$ is equal to $0$ in an open neighborhood of $U$, and hence $f=0$ as well. 

Let us prove surjectivity. Let us take $s \in \mcS(U)$ 
with representative data $F_i\in \mcF(W'_i)$, where
$\{W'_i\subset V_{\bar 0}\}$ is an open cover of $U$ in $V_{\bar 0}$, see Section \ref{sec def of a G-domain}.
We need to show that there is an open set $R$ in $V_{\bar 0}$ that contains $U$ and a function $f\in \mcF(R)$ such that $f|_U=s$. In other words, there exists a representative data of $s$ that contains only one open set $R$ and one function $f$.

Without loss of generality, we may assume that the covering $\{W'_i\}$ of $U$ is locally finite. Further, there exists an open, locally finite covering $\{W_i\}$ of $U$ in $V_{\bar 0}$ finer than $\{W'_i\}$ such that $\overline{W}_i\subset W'_i$. Now we define a set
$$
R= \{ x\in \bigcup W_i\,\, |\,\, (F_i)_x = (F_j)_x, \,\, \forall \,\, i,j: \,\, x \in \overline{W}_i\cap \overline{W}_j \}.
$$
Let us prove that $R$ is open in $V_{\bar 0}$. Let $x\in R$. Since the covering $\{W_i\}$ is locally finite, we have only a finite number of sets $W_1, \ldots, W_q$ such that $x\in \overline{W}_i$. We have $(F_i)_x = (F_j)_x$ for any $i, j\in \{1, \ldots q\}$. Therefore there exist an open set $W_x$ such that $(F_i)|_{W_x} = (F_j)|_{W_x}$. We can assume that $W_x$ is covered only by   $W_1, \ldots, W_q$. Hence, $W_x\subset R$. Now we can glue the data $F_i|_{W_i\cap R}$ to get a representative $f$ of $s$.     
\end{proof}

Now we can prove the following theorem. 

\begin{theorem}\label{theor morphism of H-graded is defined by coordinates}
    Let $\mcU =(U, \mcO_{\mcU})$, $\mcU'=(U', \mcO_{\mcU'})$ be two $H$-graded domains corresponding to $H$-graded vector superspaces $V$ and $V'$, respectively,  with graded coordinates $(x^g_p,\xi^h_q)$ and $(y^g_i,\eta^h_j)$, respectively, where $|g|=\bar 0$ and $|h|=\bar 1$, and let the following functions are given
    \begin{equation}\label{eq morphism of H-graded is defined by coordinates}
        \Phi^*(y^g_i) = F^g_i \in (\mcO_{\mcU})_g, \quad  \Phi^*(\eta^h_j) = H^h_j \in (\mcO_{\mcU})_h
    \end{equation}
    such that $(F^e_1(u), \ldots, F^e_{\dim V_e}(u))\in U'$ for any $u\in U$. Then there exists a unique morphism $\Phi:\mcU\to \mcU'$ of $H$-graded domains compatible with (\ref{eq morphism of H-graded is defined by coordinates}). 
\end{theorem}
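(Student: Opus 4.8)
The plan is to prove uniqueness first and then existence, the latter by lifting the prescribed coordinate images to an honest morphism of superdomains and descending it along the inclusion $\iota$. For uniqueness I would invoke the discussion preceding the theorem: for a type~I morphism the pullback of any local section $f=\iota^*(F)$ is given by Formula~(\ref{eq G domain form local morphism 1}), namely $\Phi^*(f)=\iota^*(F(s^*(y^g_i),s^*(\eta^h_j)))$, and this expression depends only on the images $s^*(y^g_i),s^*(\eta^h_j)$, whose $\iota$-restrictions are exactly the data $F^g_i,H^h_j$. Since every morphism of $H$-graded domains locally coincides with one of type~I, $\Phi^*$ is locally—hence globally—determined by (\ref{eq morphism of H-graded is defined by coordinates}), and the base map is forced by $\Phi_0(u)=(F^e_1(u),\ldots,F^e_{\dim V_e}(u))$. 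Thus at most one such $\Phi$ exists.

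For existence, the first step is to replace the sections over $U$ by genuine functions on a neighbourhood of $\iota(U)$ in $V_{\bar 0}$. Using Proposition~\ref{prop shapira} (applied coefficient-wise in $\bigwedge(V^*_{\bar 1})$), each $F^g_i$ and $H^h_j$ admits a single representative on an open set containing $\iota(U)$; intersecting the finitely many such sets and projecting onto the relevant weight by Formula~(\ref{eq G-decomposition of F}), I obtain homogeneous representatives $\tilde F^g_i$ of weight $g$ and $\tilde H^h_j$ of weight $h$ on one common $G$-invariant open $W\supset\iota(U)$. Homogeneity survives this projection because $\iota^*$ intertwines the $G$-actions, so the weight-$g$ component of a representative of $F^g_i$ is again a representative of $F^g_i$. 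Next I would form the reduced candidate $\sigma\colon W\to V'_{\bar 0}$, $\sigma(w)=((\tilde F^g_i)_{\mathrm{red}}(w))$; by Proposition~\ref{prop F^g(u)=0} the components with $g\ne e$ vanish on $\iota(U)$, while the $g=e$ components recover $\Phi_0$, so the hypothesis $(F^e_1(u),\ldots)\in U'$ gives $\sigma(\iota(U))\subset\iota'(U')\subset W'$. Replacing $W$ by $\sigma^{-1}(W')$ (still $G$-invariant and still a neighbourhood of $\iota(U)$) then arranges that the reduced map lands in $W'$.

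With this in place I would apply the fundamental theorem on morphisms of superdomains (\cite{Leites}), which in its existence form produces a unique morphism $s\colon\mcW\to\mcW'$ with $s^*(y^g_i)=\tilde F^g_i$ and $s^*(\eta^h_j)=\tilde H^h_j$: the even images satisfy the reduced range condition just arranged, and the odd images are unconstrained. Defining $\Phi$ through the diagram~(\ref{eq morphism type II, diagram})—equivalently by Formula~(\ref{eq G domain form local morphism 1})—yields a type~I morphism whose coordinate images restrict to $F^g_i,H^h_j$, and its independence of the chosen representatives is exactly the remark following (\ref{eq G domain form local morphism 1}).

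The remaining, and to my mind most delicate, point is that $\Phi$ preserves the $H$-grading. I would deduce this from $G$-equivariance of $s$: for each $a\in G$ both $s^*$ and its conjugate by the $G$-actions on $\mcW$ and $\mcW'$ are pullbacks of superdomain morphisms, and they agree on the coordinates $y^g_i,\eta^h_j$ precisely because the images $\tilde F^g_i,\tilde H^h_j$ were chosen homogeneous; by the uniqueness half of the chart theorem they coincide, so $s$ is $G$-equivariant. Since $\iota,\iota'$ intertwine the actions, $\Phi^*$ commutes with the $G$-action, i.e.\ it carries weight-$\chi$ sections to weight-$\chi$ sections, which is exactly $H$-grading preservation, completing the construction of a morphism compatible with (\ref{eq morphism of H-graded is defined by coordinates}). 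This last step is the main obstacle, because it is where the passage from equality on generators to equality on all smooth or holomorphic sections must be controlled, and it is what forces the earlier insistence on homogeneous representatives and on a $G$-invariant $W$.
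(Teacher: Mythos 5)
Your proof is correct and follows essentially the same route as the paper's: lift the prescribed data to a morphism of superdomains via Proposition \ref{prop shapira}, use Proposition \ref{prop F^g(u)=0} to ensure the reduced map carries $U$ into $U'$, invoke the chart theorem of Leites to produce $s$, descend along $\iota$ by Formula (\ref{eq G domain form local morphism 1}), and obtain uniqueness from the fact that type~I morphisms are locally determined by their coordinate images. You additionally spell out two points the paper leaves implicit — averaging the representatives over $G$ on a $G$-invariant neighbourhood to make them homogeneous, and deducing preservation of the $H$-grading from the $G$-equivariance of $s$ (itself forced by uniqueness in the chart theorem) — both of which are correct and tighten the written argument rather than diverge from it.
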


\begin{proof} 
By Proposition \ref{prop shapira}, for $F^g_i$ and $H^h_j$ there exists an open set $W$ in $V_{\bar 0}$ that contains $U$ and representatives $\tilde F^g_i$ and $\tilde H^h_j$, of $F^g_i$ and $H^h_j$, defined in $W$. Furthermore, by Proposition \ref{prop F^g(u)=0}, $\tilde F^g_i (u) = F^g_i (u) =0$ for any $g\ne e$. Hence, the map $U\ni u \mapsto \tilde F^g_i (u)$ maps $U$ to $U'$. Therefore, the following functions
$$
s^*(y^g_i) = \tilde F^g_i, \quad  s^*(\eta^h_j) = \tilde H^h_j
$$
define a morphism of superdomains $\mcW\to \mcW'$, where $\mcW'$ is some neighborhood of $U'$. Using the morphism $s$ we can  define the morphism $\Phi$ of $H$-graded domains by the following formula
$$
\Phi^*(f(y^g_i,\eta^h_j)) =  \iota^*(\tilde f(s^*(y^g_i), s^*(\eta^h_j))),
$$ 
where $\tilde f$ is a representative of $f$ in some neighborhood of $U$, see Proposition \ref{prop shapira}. Clearly, the definition is independent of the choice of a representative $\tilde f$ and $\Phi$ is compatible with (\ref{eq morphism of H-graded is defined by coordinates}). 
If $t$ is another morphism  of superdomains, which defines a morphism $\Psi$, compatible with (\ref{eq morphism of H-graded is defined by coordinates}), then locally $s=t$. 
\end{proof}

\begin{corollary}
    The definitions of morphisms of type I and II coincide.
\end{corollary}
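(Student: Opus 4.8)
The inclusion of type~I morphisms into type~II morphisms is immediate: a type~I morphism coincides with itself on all of $\mcU$, so the single open set $\mcU$ together with its witnessing superdomain morphism $s$ already verifies the local condition. Thus the only content of the corollary is the reverse inclusion, namely that every type~II morphism is in fact of type~I. The plan is to extract from an arbitrary type~II morphism $\Phi$ its action on the coordinates of $\mcU'$ and then feed this data into Theorem~\ref{theor morphism of H-graded is defined by coordinates}, whose proof already manufactures a type~I morphism realizing any prescribed coordinate images.

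So let $\Phi=(\Phi_0,\Phi^*):\mcU\to\mcU'$ be a type~II morphism, and let $(y^g_i,\eta^h_j)$ be the graded coordinates of $\mcU'$. Since these are global sections of the structure sheaf, the elements
$$
F^g_i:=\Phi^*(y^g_i),\qquad H^h_j:=\Phi^*(\eta^h_j)
$$
are global sections of $\mcO_{\mcU}$. First I would check that this is admissible data for Theorem~\ref{theor morphism of H-graded is defined by coordinates}. Homogeneity $F^g_i\in(\mcO_{\mcU})_g$ and $H^h_j\in(\mcO_{\mcU})_h$ is forced because $\Phi$ preserves the $H$-grading while $y^g_i,\eta^h_j$ are homogeneous of the indicated weights. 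For the remaining condition $(F^e_1(u),\dots,F^e_{\dim V_e}(u))\in U'$, I would evaluate pointwise: by Proposition~\ref{prop F^g(u)=0} we have $F^g_i(u)=0$ whenever $g\ne e$, while for $g=e$ the functions $F^e_i$ reproduce the components of the base map, so that $F^e_i(u)$ is the $i$-th coordinate of $\Phi_0(u)\in U'$. Hence the point lies in $U'$.

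With the data verified, Theorem~\ref{theor morphism of H-graded is defined by coordinates} yields a morphism $\Phi'$ of $H$-graded domains compatible with $(F^g_i,H^h_j)$ and \emph{unique} with this property. I would then observe that the morphism produced in the proof of that theorem is literally of type~I: using Proposition~\ref{prop shapira} one obtains global representatives $\tilde F^g_i,\tilde H^h_j$ on a single open $W\subset V_{\bar 0}$ containing $U$, and setting $s^*(y^g_i)=\tilde F^g_i$, $s^*(\eta^h_j)=\tilde H^h_j$ defines a single superdomain morphism $s:\mcW\to\mcW'$ for which the diagram (\ref{eq morphism type II, diagram}) commutes by construction. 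Thus $\Phi'$ is of type~I. Finally, since $\Phi$ is itself a morphism of $H$-graded domains compatible with the same data $(F^g_i,H^h_j)$, the uniqueness clause of Theorem~\ref{theor morphism of H-graded is defined by coordinates} gives $\Phi=\Phi'$, whence $\Phi$ is of type~I.

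The step I expect to be the genuine obstacle is the one in the third paragraph: confirming that the morphism constructed inside the proof of Theorem~\ref{theor morphism of H-graded is defined by coordinates} is not merely type~II but honestly type~I. This is exactly where the global gluing of Proposition~\ref{prop shapira} is essential, since it is what replaces the a priori only local superdomain morphisms by a single $s$ defined on one neighborhood $W$ of $U$; the verification of weights and of the base-map condition is routine bookkeeping by comparison.
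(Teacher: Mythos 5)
Your proof is correct and follows essentially the same route the paper intends for this corollary: the nontrivial inclusion is obtained by feeding the coordinate images $\Phi^*(y^g_i)$, $\Phi^*(\eta^h_j)$ of a type~II morphism into Theorem~\ref{theor morphism of H-graded is defined by coordinates}, whose proof (via the global gluing of Proposition~\ref{prop shapira}) constructs a genuinely type~I morphism, and whose uniqueness clause forces it to equal~$\Phi$. You also correctly identify Proposition~\ref{prop shapira} as the key ingredient that upgrades the a priori only local superdomain morphisms to a single $s$ on one neighborhood of $U$, which is exactly the point of the paper's construction.
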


\subsection{Morphisms of a $H$-graded domain into a superdomain}\label{sec morphism of grdaed to super}

Let $\mcW' = (W',\mcO_{\mcW'})$ be a superdomain, where $W'$ is an open set in $\C^{n}$ and $\mcO_{\mcW'}$ be a sheaf of holomorphic or smooth complex-valued superfunctions. In the case $G=\Z_2^p$, we may also assume that $W'$ is an open set in $\R^{n}$. Now, let us define a morphism 
$$
\Psi=(\Psi_0, \Psi^*) : \mcU \to \mcW',
$$ 
where $\mcU =(U,\mcO)$ is a $H$-graded domain associated with a $H$-graded vector superspace $V$. We assume that $\Psi_0: U\to W'$ is a smooth or holomorphic map and $\Psi^*: \mcO_{\mcW'} \to (\Psi_0)_*\mcO_{\mcU}$ is a morphism of locally ringed spaces such that the following holds. There exists an open subset $W\subset V_{\bar 0}$  and a  morphism of superdomains 
$$
t= (t_0, t^*) : \mcW \to \mcW',
$$ 
where $\mcW= (W, \mcF_W \otimes_{\C}\bigwedge(V^*_{\bar 1}))$,
such that the following diagram is locally commutative
\[
\begin{tikzcd}
& \mcW \arrow[dr,"t"] \\
\mcU \arrow[ur,"\iota"] \arrow[rr,"\Psi"] && \mcW'
\end{tikzcd}.
\]
This definition implies that $\Psi$ is locally defined by the image of local coordinates, since $t$ is given by the images of local coordinates \cite{Leites}.

Now let we have a morphism of superdomains $t= (t_0, t^*) : \mcW \to \mcW'$ and $\iota_0:U\to W$ is the natural inclusion, where $U$ is an open subset in $V_e$. Denote by $\mcU$ the $H$-graded domain induced by the superdomain $ \mcW$. In other words, let $\mcU: = (U, \iota^{-1}(\mcO_{\mcW} ))$.

\begin{proposition}\label{prop morfism H-grdaed to super indused by super}
There exists a morphism $\Psi: \mcU \to \mcW'$ defined by $\Psi_0= t_0\circ \iota_0$ and $\Psi^* (F) = \iota^*\circ t^*(F)$. Since a morphism of superdomains $t$ is determined by images of local coordinates, the same is true for the morphism $\Psi$. 
\end{proposition}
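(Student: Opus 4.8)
The plan is to recognize $\Psi$ as nothing more than the composition $t\circ\iota$ of two morphisms of locally ringed spaces, where $\iota=(\iota_0,\iota^*):\mcU\to\mcW$ is the natural embedding attached to the inverse image sheaf $\iota^{-1}(\mcO_{\mcW})$, with structure map $\iota^*:\mcO_{\mcW}\to(\iota_0)_*\iota^{-1}(\mcO_{\mcW})$ given by the unit of the adjunction between $\iota^{-1}$ and $(\iota_0)_*$. First I would check the easy pieces: $\Psi_0=t_0\circ\iota_0$ is smooth or holomorphic, being the composite of the smooth (holomorphic) map $t_0$ with the inclusion $\iota_0$, and its image lies in $W'$ because $t_0$ maps $W$ into $W'$. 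Since the composition of morphisms of locally ringed spaces is again such a morphism, the pair $(\Psi_0,\Psi^*)$ with $\Psi^*=\iota^*\circ t^*$ is automatically a morphism of locally ringed spaces.

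Next I would verify the bookkeeping that makes $\Psi^*$ have the correct source and target. For a section $F$ of $\mcO_{\mcW'}$ over $V'\subset W'$, the section $t^*(F)$ lives over $t_0^{-1}(V')$, and $\iota^*(t^*(F))$ then lives over $(t_0\circ\iota_0)^{-1}(V')=\Psi_0^{-1}(V')$, so indeed $\Psi^*:\mcO_{\mcW'}\to(\Psi_0)_*\mcO_{\mcU}$. The triangle in the definition of a morphism from a $H$-graded domain into a superdomain then commutes with the given $W$ and $t$, since $\Psi=t\circ\iota$ is by construction the composite along that triangle; commutativity holds globally, hence a fortiori locally. This already establishes that $\Psi$ is a morphism of the required type. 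Note that $\mcW'$ is a plain superdomain, so there is no $H$-grading on the target to preserve, and this imposes no extra condition.

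It then remains to transfer the determined-by-local-coordinates property from $t$ to $\Psi$. Let $(y_i,\eta_j)$ be local coordinates on $\mcW'$. By the chart theorem for superdomains \cite{Leites}, every $F\in\mcO_{\mcW'}$ satisfies $t^*(F)=F(t^*(y_i),t^*(\eta_j))$, the right-hand side denoting evaluation of the smooth or holomorphic superfunction $F$ at the sections $t^*(y_i),t^*(\eta_j)\in\mcO_{\mcW}$. Applying $\iota^*$ and commuting it inside the evaluation of $F$ would give $\Psi^*(F)=F(\Psi^*(y_i),\Psi^*(\eta_j))$, so that $\Psi$ is determined by $\Psi^*(y_i),\Psi^*(\eta_j)$. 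The hard part will be precisely this last commuting step: because $F$ need not be polynomial in the even coordinates, it is not a purely algebraic identity. The justification rests on the stalk identification $\mcS_v=\mcF_{(v,0)}$ recorded in Example \ref{ex stalk} and the discussion following it: since $\iota^*$ induces an isomorphism of local superalgebras on each stalk over a point of $U$, and since evaluation of a smooth or holomorphic superfunction at a tuple of sections is a stalkwise operation, $\iota^*$ commutes with such evaluation. This closes the argument.
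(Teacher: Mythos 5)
Your proof is correct and matches the paper's reasoning: the paper states this proposition without a separate proof, regarding $\Psi = t\circ\iota$ as a morphism of the required type by construction (the defining triangle commutes globally with the given $t$), with determination by coordinate images transferred from $t$ exactly as in the earlier discussion of type-I morphisms, cf.\ formula (\ref{eq G domain form local morphism 1}). The one place you go beyond what is needed is pulling $\iota^*$ inside the evaluation to get $\Psi^*(F)=F(\Psi^*(y_i),\Psi^*(\eta_j))$; the weaker identity $\Psi^*(F)=\iota^*\bigl(F(t^*(y_i),t^*(\eta_j))\bigr)$, which is the form the paper uses, already gives determination by the images of local coordinates (via Proposition \ref{prop shapira}, two lifts $t,t'$ agreeing on coordinates over $U$ agree near $U$), and it avoids having to define evaluation of a smooth superfunction at sections of $\mcO_{\mcU}$ — your stalkwise justification of that stronger step is fine but dispensable.
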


\section{$H$-graded manifolds}

\subsection{Definition of a $H$-graded manifold}\label{sec def H-graded manifold, finite case}

In Sections \ref{sec def of a G-domain} and \ref{sec morphisms of H-graded domains} we defined $H$-graded domains and morphisms between them. Now we define a $H$-graded manifold as a locally ringed space $\mcN=(N,\mcO_{\mcN})$ that is locally isomorphic to a $H$-graded domain satisfying the following condition. Let $\phi_i:\mcN\to \mcU_i$ be two local charts (local isomorphisms from $\mcN$ to a $H$-graded domain), then the composition $\phi_j\circ\phi_i^{-1}: \mcU_i\to \mcU_j $ is a morphism of $H$-graded domains in the sense of Section \ref{sec morphisms of H-graded domains}. A graded manifold morphism is a $H$-graded local morphism, which locally coincides with a morphism of $H$-graded domains. The same for a morphism of a $H$-graded manifold to a supermanifold.

\section{$H$-covering for a superdomain and a supermanifold}

\subsection{$H$-covering of a superdomain}\label{sec H -covering for superdomains}

Let, as above, $G$ be a finite abelian group, $H$ be the dual group of $G$ with fixed parity of elements, that is, with a surjective homomorphism $H\to \Z_2$, $h\mapsto |h|$. Let us take a superdomain $\mcU=(U, \mcO_{\mcU})$  with even and odd coordinates  complex $(x_i,\xi_j)$. (Note that in the smooth case, a complex coordinate $z$ is equal to the sum $u+iv$ of two real coordinates. For $G=\Z_2^p$, any coordinate may be pure real.) Consider the formal even and odd variables $x_i^g$ and $\xi_j^h$, where $g,h\in H$, $|g|=\bar 0$ and $|h|=\bar 1$. The upper index here shows the weight of a coordinate. For example $x_i^e$ is a variable of weight $e\in H$. Denote by $V$ the $H$-graded vector space spanned by $(x_i^g, \xi_j^h)$, where $g,h\in H$. Denote by $\mcV$ the $H$-graded domain with the base space $U$ and the structure sheaf $\mcO_{\mcV}$ associated with $V$ as in Section \ref{sec def of a G-domain}. Note that the $H$-graded domain $\mcV$ and the superdomain $\mcU$ have the same base space $U$ with local coordinates denoted by $x_i^e$ in the graded case  and by $x_i$ in the super case.  Denote by $\mcW'$ the superdomain with coordinates $x_i^g, \xi_j^h$, $g,h\in H$. Again, any of those coordinates have real and imaginary part, and for $G=\Z_2^p$ coordinates may be pure real.

Let us define a morphism $t=(t_0,t^*): \mcW=(W_0,\mcO_{\mcW})\to \mcU$ by the following formula for local coordinates
\begin{align*}
t_0=\id,\quad 
t^*(x_i) = \sum_{|g|=\bar 0} x_i^g,\quad t^*(\xi_j) = \sum_{|h|=\bar 1} \xi_j^h,
\end{align*}
where $W_0:= t_0^{-1} (U) $ is an open set. Now, by Proposition \ref{prop morfism H-grdaed to super indused by super}, we get a morphism $\pp=(\pp_0,\pp^*): \mcV\to \mcU$ defined by
\begin{equation}\label{eq covering map domain}
\pp_0=\id,\quad 
\pp^*(x_i) = \sum_{|g|=\bar 0} x_i^g,\quad \pp^*(\xi_j) = \sum_{|h|=\bar 1} \xi_j^h,
\end{equation}

\begin{definition}
We call the $H$-graded domain $\mcV$ together with the morphism $\pp: \mcV\to \mcU$ given by Formula (\ref{eq covering map domain}) a $H$-covering of a superdomain $\mcU$. 
\end{definition}

\subsection{Universal properties of a $H$-covering of a superdomain}

Now we will show that $\pp: \mcV\to \mcU$ satisfies some universal properties of a covering space. In fact, let us take a $H$-graded domain $\mcV'$ associated with a $H$-graded vector space $V'$, and let $\phi:\mcV' \to \mcU$ be a morphism of a $H$-graded domain into a superdomain. Let us construct a $H$-graded morphism $\Phi: \mcV'\to \mcV$ such that $\phi= \pp\circ \Phi$.  

The morphism $\phi:\mcV' \to \mcU$ is defined by images of local coordinates
$$
\phi^*(x_i) = \sum_{|g|=\bar 0} F_i^{g},\quad \phi^*(\xi_j) = \sum_{|h|=\bar 1} H_j^{h}.
$$
 We put 
\begin{equation}\label{eq lift Phi graded domains}
    \Phi^*(x^g_i) =  F_i^{g}, \quad 
   \Phi^*(\xi^h_j) =  H_j^{h}.
\end{equation}
   
By Theorem \ref{theor morphism of H-graded is defined by coordinates}, see also Proposition \ref{prop F^g(u)=0}, the morphism $\Phi: \mcV'\to \mcV$ is defined.  In addition, $\phi^* (x_i)=[ \Phi^* \circ \pp^*] (x_i)$ and $\phi^* (\xi_j)= [\Phi^* \circ \pp^*] (\xi_j)$. Therefore, $\phi= \pp\circ \Phi$. Since $\Phi$ has to preserve the $H$-grading, Formulas (\ref{eq lift Phi graded domains}) are possible in a unique way. So we proved the following theorem.

\begin{theorem}\label{theo lift of phi mixed to Phi graded domain}
	Let $\phi: \mcV'\to \mcU$ be the morphism from a $H$-domain to a superdomain. Let $\pp:\mcV\to\mcU$ be its $H$-covering. Then there exists a unique morphism of $H$-graded domains $\Phi: \mcV'\to \mcV$ such that the following diagram is commutative:\[
	\begin{tikzcd}
	& \mcV \arrow[dr,"\pp"] \\
	\mcV' \arrow[ur,"\Phi"] \arrow[rr,"\phi"] && \mcU
	\end{tikzcd}
	\]
\end{theorem}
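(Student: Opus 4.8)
The plan is to invoke Theorem \ref{theor morphism of H-graded is defined by coordinates}, which says that a morphism of $H$-graded domains is completely determined by, and may be freely prescribed through, the homogeneous images of the graded coordinates of the target, subject to one compatibility condition on the base map. Since the covering $\mcV$ carries the graded coordinates $x_i^g$ (of weight $g$ with $|g|=\bar 0$) and $\xi_j^h$ (of weight $h$ with $|h|=\bar 1$), producing the lift $\Phi$ reduces to choosing homogeneous elements of $\mcO_{\mcV'}$ of the correct weights to serve as $\Phi^*(x_i^g)$ and $\Phi^*(\xi_j^h)$.

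First I would use that $\phi:\mcV'\to\mcU$, being a morphism of a $H$-graded domain into a superdomain, is determined by the images $\phi^*(x_i),\phi^*(\xi_j)\in\mcO_{\mcV'}$. Because $\phi$ preserves parity and the parity of a homogeneous section of weight $\chi$ is $|\chi|$, the $H$-decomposition of the even section $\phi^*(x_i)$ involves only weights of even parity and that of the odd section $\phi^*(\xi_j)$ only weights of odd parity; write $\phi^*(x_i)=\sum_{|g|=\bar 0}F_i^{g}$ and $\phi^*(\xi_j)=\sum_{|h|=\bar 1}H_j^{h}$ with $F_i^{g}\in(\mcO_{\mcV'})_g$ and $H_j^{h}\in(\mcO_{\mcV'})_h$. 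I then set $\Phi^*(x_i^g)=F_i^{g}$ and $\Phi^*(\xi_j^h)=H_j^{h}$, which carry exactly the weights demanded by Theorem \ref{theor morphism of H-graded is defined by coordinates}.

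Next I must check the base-map hypothesis of that theorem, that the weight-$e$ components define a map of the base of $\mcV'$ into the base $U$ of $\mcV$. Here Proposition \ref{prop F^g(u)=0} is the crucial input: every component $F_i^{g}$ with $g\ne e$ vanishes identically on the base, so the base map of $\phi$ is computed solely from the $F_i^{e}$; since that base map already lands in $U$ (as $\phi$ is a morphism into $\mcU$, which shares its base with $\mcV$), the required condition holds. Theorem \ref{theor morphism of H-graded is defined by coordinates} then yields a unique $\Phi:\mcV'\to\mcV$ realizing these coordinate images. Commutativity is a one-line coordinate check: $\Phi^*\circ\pp^*$ applied to $x_i$ gives $\sum_{|g|=\bar 0}\Phi^*(x_i^g)=\sum_{|g|=\bar 0}F_i^{g}=\phi^*(x_i)$, and likewise for $\xi_j$, whence $\phi=\pp\circ\Phi$, since morphisms into a superdomain are pinned down by coordinate images.

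The part requiring the most care is uniqueness, and the essential ingredient is that $\Phi$ must preserve the $H$-grading. If $\Phi'$ is any lift, then $\Phi'^*\circ\pp^*$ agrees with $\phi^*$ on $x_i$, i.e.\ $\sum_{|g|=\bar 0}\Phi'^*(x_i^g)=\sum_{|g|=\bar 0}F_i^{g}$; grading-preservation forces each $\Phi'^*(x_i^g)$ to be homogeneous of weight $g$, so matching $H$-homogeneous components term by term, using the uniqueness of the $H$-decomposition from Theorem \ref{theor decomposition superformula}, gives $\Phi'^*(x_i^g)=F_i^{g}$, and similarly on the odd coordinates. Hence $\Phi'$ and $\Phi$ coincide on all graded coordinates of $\mcV$ and therefore agree as morphisms. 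Without the grading constraint the single identity $\phi^*(x_i)=\sum_g F_i^{g}$ could be split among the $\Phi^*(x_i^g)$ in many ways, so it is precisely the $H$-homogeneity that rigidifies the lift.
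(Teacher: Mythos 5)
Your proposal is correct and follows essentially the same route as the paper: decompose $\phi^*(x_i)$ and $\phi^*(\xi_j)$ into $H$-homogeneous components, define $\Phi$ on the graded coordinates by these components via Theorem \ref{theor morphism of H-graded is defined by coordinates} (with Proposition \ref{prop F^g(u)=0} guaranteeing the base-map condition), verify commutativity on coordinates, and derive uniqueness from the requirement that $\Phi$ preserve the $H$-grading. The paper's proof is terser on the parity bookkeeping and the uniqueness step, but the content is identical to yours.
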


As a consequence of Theorem \ref{theo lift of phi mixed to Phi graded domain} we get the following theorem.

\begin{theorem}\label{theo lift of phi super to Phi graded}
	Let $\psi: \mcU\to \mcU'$ be a morphism of superdomains. Let $\pp:\mcV\to\mcU$ and $\pp':\mcV'\to\mcU'$ be their $H$-coverings, respectively. Then there exists a unique morphism of $H$-graded domains $\Psi: \mcV\to \mcV'$ such that the following diagram is commutative:\[\begin{tikzcd}
	\mcV \arrow{r}{\Psi} \arrow[swap]{d}{\pp} & \mcV' \arrow{d}{\pp'} \\
\mcU \arrow{r}{\psi} & \mcU'
	\end{tikzcd}
	\]
A $H$-covering $\mcV$ of a superdomain $\mcU$ is unique up to isomorphism. 
\end{theorem}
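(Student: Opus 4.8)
The plan is to deduce this theorem from Theorem \ref{theo lift of phi mixed to Phi graded domain} by composing the covering projection $\pp$ with the superdomain morphism $\psi$. The point is that the lifting property already established for a single covering is strong enough that functoriality and the uniqueness of coverings follow purely formally.

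First I would observe that the composition $\psi \circ \pp : \mcV \to \mcU'$ is again a morphism from a $H$-graded domain to a superdomain in the sense of Section \ref{sec morphism of grdaed to super}. Indeed, by construction (Proposition \ref{prop morfism H-grdaed to super indused by super}) the projection $\pp$ is induced by a superdomain morphism $t : \mcW \to \mcU$ through the natural embedding $\iota : \mcV \to \mcW$, so that $\pp^* = \iota^* \circ t^*$. Since $\psi$ is a morphism of superdomains, the composition $\psi \circ t : \mcW \to \mcU'$ is again a morphism of superdomains, and it induces $\psi \circ \pp$ through the very same embedding $\iota$. Hence $\psi \circ \pp$ satisfies the hypotheses of Theorem \ref{theo lift of phi mixed to Phi graded domain}.

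Next, applying Theorem \ref{theo lift of phi mixed to Phi graded domain} to the morphism $\phi := \psi \circ \pp : \mcV \to \mcU'$ together with the $H$-covering $\pp' : \mcV' \to \mcU'$, I obtain a unique morphism of $H$-graded domains $\Psi : \mcV \to \mcV'$ satisfying $\pp' \circ \Psi = \psi \circ \pp$. This is exactly the commutativity of the required square, and the uniqueness of $\Psi$ is inherited verbatim from the uniqueness clause of Theorem \ref{theo lift of phi mixed to Phi graded domain}.

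Finally, for uniqueness of the $H$-covering up to isomorphism I would take $\psi = \id_{\mcU}$ and apply the first part of the theorem to two $H$-coverings $\pp : \mcV \to \mcU$ and $\tilde\pp : \tilde{\mcV} \to \mcU$. This yields morphisms $\Psi : \mcV \to \tilde{\mcV}$ and $\tilde\Psi : \tilde{\mcV} \to \mcV$ with $\tilde\pp \circ \Psi = \pp$ and $\pp \circ \tilde\Psi = \tilde\pp$. The composition $\tilde\Psi \circ \Psi : \mcV \to \mcV$ then satisfies $\pp \circ (\tilde\Psi \circ \Psi) = \pp$; but applying Theorem \ref{theo lift of phi mixed to Phi graded domain} to $\phi = \pp$ together with the covering $\pp$ itself shows that $\id_{\mcV}$ is the only such self-morphism, whence $\tilde\Psi \circ \Psi = \id_{\mcV}$, and symmetrically $\Psi \circ \tilde\Psi = \id_{\tilde{\mcV}}$. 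Thus $\Psi$ is an isomorphism. I expect the only point genuinely requiring care to be the verification in the second paragraph that $\psi \circ \pp$ is a legitimate morphism of a $H$-graded domain into a superdomain, rather than merely a composition of locally ringed space maps; once that is in place, everything else is a formal consequence of the uniqueness already proved.
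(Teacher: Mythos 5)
Your proposal is correct and follows essentially the same route as the paper: both set $\phi = \psi \circ \pp$, invoke Theorem \ref{theo lift of phi mixed to Phi graded domain} for existence and uniqueness of $\Psi$, and prove uniqueness of the covering by lifting $\id_{\mcU}$ both ways and using uniqueness of the lift of $\pp$ through itself to force the compositions to be identities. Your second paragraph, checking via Proposition \ref{prop morfism H-grdaed to super indused by super} that $\psi \circ \pp$ is a legitimate morphism of a $H$-graded domain into a superdomain (induced by the superdomain morphism $\psi \circ t$), is a point the paper's proof passes over silently, so your write-up is if anything slightly more careful.
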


\begin{proof}

To prove this statement, we use Theorem \ref{theo lift of phi mixed to Phi graded domain}. In fact, we put $\phi= \psi\circ \pp$. Then $\Psi$ is a $H$-covering of $\phi$, which exists by Theorem \ref{theo lift of phi mixed to Phi graded domain}.

Now assume that $\mcU$ has two coverings $\pp:\mcV\to \mcU$ and $\pp:\mcV'\to \mcU$. Then by above there exist morphisms $\Psi_1: \mcV\to \mcV'$ and $\Psi_2: \mcV'\to \mcV$ such that the following diagrams are commutative 
\[\begin{tikzcd}
\mcV \arrow{r}{\Psi_1} \arrow[swap]{d}{\pp} & \mcV' \arrow{d}{\pp'} \\
\mcU \arrow{r}{\id} & \mcU
\end{tikzcd};
\quad 
\begin{tikzcd}
\mcV \arrow{r}{\Psi_2} \arrow[swap]{d}{\pp'} & \mcV' \arrow{d}{\pp} \\
\mcU \arrow{r}{\id} & \mcU
\end{tikzcd}
\]
This implies that $\Psi_2\circ \Psi_1 : \mcV\to \mcV$ is a $H$-covering of $\id$. Hence, $\Psi_2\circ \Psi_1=\id$, since $\id$ is also a $H$-covering of $\id$. Similarly, $\Psi_1\circ \Psi_2=\id$. Therefore, a $H$-covering is unique up to isomorphism. 
\end{proof}

Let us give another definition of a $H$-covering. 

\begin{definition}
	Let $\mcU$ be a superdomain. A $H$-covering of $\mcU$ is a $H$-graded domain $\mcV$ together with a morphism $\pp: \mcV\to \mcU$ such that for any  $H$-graded domain $\mcV'$ and a morphism $\phi: \mcV'\to \mcU$ there exists a unique morphism $\Phi: \mcV'\to \mcV$ such that the following diagram is commutative:
		\[
	\begin{tikzcd}
	& \mcV \arrow[dr,"\pp"] \\
	\mcV' \arrow[ur,"\Phi"] \arrow[rr,"\phi"] && \mcU
	\end{tikzcd}
	\]
	In other words a $H$-covering of $\mcU$ is a $H$-graded domain $\mcV$ satisfying universal property.  
\end{definition}

\begin{corollary}
	Two definitions of $G$--coverings of $\mcU$ are equivalent. 
\end{corollary}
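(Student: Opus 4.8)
The plan is to prove the equivalence in two directions: first that the concrete object singled out by the first definition satisfies the universal property of the second, and then that any object satisfying the universal property is canonically isomorphic to it, so that the two notions describe the same covering up to isomorphism.

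The forward direction requires no new work: it is precisely the content of Theorem \ref{theo lift of phi mixed to Phi graded domain}. There we constructed the $H$-graded domain $\mcV$ together with the explicit projection $\pp$ of Formula (\ref{eq covering map domain}), and showed that for every $H$-graded domain $\mcV'$ and every morphism $\phi:\mcV'\to\mcU$ there is a unique lift $\Phi:\mcV'\to\mcV$ with $\phi=\pp\circ\Phi$. This is exactly the lifting-and-uniqueness condition demanded by the second definition, so every covering in the sense of the first definition is a covering in the sense of the second.

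For the converse I would run the standard uniqueness-of-universal-objects argument, which is in fact the same computation as in the second half of the proof of Theorem \ref{theo lift of phi super to Phi graded}. Suppose $(\widetilde{\mcV},\widetilde{\pp})$ is an arbitrary $H$-graded domain with a projection to $\mcU$ satisfying the universal property of the second definition. Since, by the previous paragraph, the concrete pair $(\mcV,\pp)$ also satisfies this universal property, I apply the universal property of $\mcV$ to the morphism $\widetilde{\pp}:\widetilde{\mcV}\to\mcU$ to obtain a unique $\Phi:\widetilde{\mcV}\to\mcV$ with $\pp\circ\Phi=\widetilde{\pp}$, and symmetrically apply the universal property of $\widetilde{\mcV}$ to $\pp$ to obtain a unique $\Psi:\mcV\to\widetilde{\mcV}$ with $\widetilde{\pp}\circ\Psi=\pp$. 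Then $\pp\circ(\Phi\circ\Psi)=\widetilde{\pp}\circ\Psi=\pp$, so both $\Phi\circ\Psi$ and $\id_{\mcV}$ are lifts of $\pp$ through $\pp$; uniqueness in the universal property of $\mcV$ forces $\Phi\circ\Psi=\id_{\mcV}$, and the symmetric argument gives $\Psi\circ\Phi=\id_{\widetilde{\mcV}}$. Hence $(\widetilde{\mcV},\widetilde{\pp})$ is isomorphic to $(\mcV,\pp)$ over $\mcU$, so the two definitions single out the same object up to isomorphism.

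I would close by remarking that every composition appearing above is a genuine morphism of $H$-graded domains, by the composition lemma proved in Section \ref{sec morphisms of H-graded domains}, so the whole argument stays inside the relevant category. I expect no real obstacle here; the only point requiring a word of care is that the second definition characterizes coverings only up to isomorphism, so the asserted equivalence must be read as identifying the explicit object of the first definition with the isomorphism class pinned down by the universal property — which is exactly what the two steps above deliver.
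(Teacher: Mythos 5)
Your proposal is correct and follows the same route as the paper: the forward direction is Theorem \ref{theo lift of phi mixed to Phi graded domain}, and the converse is the standard uniqueness-of-universal-objects argument already carried out in the second half of the proof of Theorem \ref{theo lift of phi super to Phi graded}. You merely spell out the two-lift computation $\Phi\circ\Psi=\id$, $\Psi\circ\Phi=\id$ that the paper cites by reference, which is a faithful expansion rather than a different argument.
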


\begin{proof}
	We proved that the $H$-covering according to our first definition satisfies the universal property.  We saw, see the proof of  Theorem \ref{theo lift of phi super to Phi graded},  that any object $\mcV$ satisfying the universal property is unique up to isomorphism. The result follows. 
\end{proof}

Let us show that the correspondence $\psi \mapsto \Psi$ is functorial. 
\begin{remark}\label{rem psi to Psi is a functor domains}
	Let $\psi_{12}:\mcU_1\to \mcU_2$ and $\psi_{23}:\mcU_2\to \mcU_3$ be two morphism of superdomains. Denote by $\Psi_{ij}$ the $H$-covering of $\psi_{ij}$. Then the $H$-covering of $\psi_{23} \circ \psi_{12}$ is equal to $\Psi_{23} \circ \Psi_{12}$. In other words, the correspondence $\psi \mapsto \Psi$ is a functor from the category of superdomains to the category of $H$-domains. 
\end{remark}

\subsection{$H$-covering of a supermanifold}

Using results from previous sections, we can construct a $H$-covering $\mcP$ for any supermanifold $\mcM$, where $\mcM$ is holomorphic, real smooth of dimension $2p|2q$ or for $G=\Z_2^n$ it is real smooth or holomorphic. Let us choose an atlas $\{\mcU_i\}$ of $\mcM$. Denote by $\psi_{ij}$ the transition function from $\mcU_j$ to $\mcU_i$. Clearly, these transition functions satisfy the cocycle condition: 
$$
\psi_{ij} \circ \psi_{jk} \circ \psi_{ki} = \id 
$$
in $\mcU_i\cap \mcU_j\cap \mcU_k$. Denote by $\mcV_i$ the $H$-covering of $\mcU_i$ and by $\Psi_{ij}$ the $H$-covering of $\psi_{ij}$. We see that by Remark \ref{rem psi to Psi is a functor domains}, $\{\Psi_{ij}\}$ also satisfies the cocycle condition:
$$
\Psi_{ij} \circ \Psi_{jk} \circ \Psi_{ki} = \id $$ Therefore, the data $\{ \mcV_i\}$ and $\{ \Psi_{ij}\}$ define a $H$-graded manifold denoted by $\mcP$. Denote by $\pp_i: \mcV_i\to \mcU_i$ the covering morphism defined in Section \ref{sec H -covering for superdomains} for any $i$. By construction of the morphisms $\psi_{ij}$ and $\Psi_{ij}$ the following diagram is comitative
$$
\begin{tikzcd}
\mcV_j \arrow{r}{\Psi_{ij}} \arrow[swap]{d}{\pp_j} & \mcV_i \arrow{d}{\pp_i} \\
\mcU_j \arrow{r}{\psi_{ij}} & \mcU_i
\end{tikzcd}.
$$
This means that we can define a morphism $\pp: \mcP\to \mcM$ such that in any chart we have $\pp|_{\mcU_i} = \pp_i$.  Since the diagram above is commutative, the morphisms $\pp_i$ are compatible, hence the global morphism $\pp$ is well defined. 

\begin{definition}
The $H$-graded manifold $\mcP$ constructed for a fixed supermanifold $\mcM$ together with a morphism $\pp: \mcP\to \mcM$ is called a $H$-covering of $\mcM$. 
\end{definition}

Now we show that $\pp: \mcP\to \mcM$ satisfies the universal property. 

\begin{theorem}
	The $H$-covering $\mcP$ of $\mcM$ together with the morphism $\pp: \mcP\to \mcM$ satisfies the following universal property. For any $H$-graded manifold $\mcN$ and a morphism $\phi:\mcN\to \mcM$ there exists a unique morphism $\Phi: \mcN \to \mcP$ such that the following diagram is commutative
\[
	\begin{tikzcd}
	& \mcP \arrow[dr,"\pp"] \\
	\mcN \arrow[ur,"\exists!\Phi"] \arrow[rr,"\phi"] && \mcM
	\end{tikzcd}.
	\]
\end{theorem}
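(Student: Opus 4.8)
The plan is to globalize the local universal property already established for $H$-coverings of superdomains (Theorem~\ref{theo lift of phi mixed to Phi graded domain}) to the manifold setting by a gluing argument. Fix an atlas $\{\mcU_i\}$ of $\mcM$ with $H$-coverings $\pp_i:\mcV_i\to\mcU_i$ assembled into $\mcP$ as in the construction above. Given a morphism $\phi:\mcN\to\mcM$ from a $H$-graded manifold, I would first pass to a common refinement: choose an atlas $\{\mcN_a\}$ of $\mcN$ by $H$-graded domains such that each $\phi(\mcN_a)$ lands in a single chart $\mcU_{i(a)}$, so that the restriction $\phi_a:=\phi|_{\mcN_a}:\mcN_a\to\mcU_{i(a)}$ is a morphism of a $H$-graded domain into a superdomain. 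This is possible because $\phi$ is continuous on underlying spaces and the charts are open.

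Next I would apply the local lifting result. For each $a$, Theorem~\ref{theo lift of phi mixed to Phi graded domain} produces a unique morphism $\Phi_a:\mcN_a\to\mcV_{i(a)}$ of $H$-graded domains with $\pp_{i(a)}\circ\Phi_a=\phi_a$. Composing with the inclusion of $\mcV_{i(a)}$ into $\mcP$ gives a local candidate for the global lift. The main work is then the gluing: I must show that on overlaps $\mcN_a\cap\mcN_b$ the two local lifts agree after accounting for the transition morphisms $\Psi_{i(a)i(b)}$ of $\mcP$. Concretely, on the overlap both $\Phi_a$ (read into chart $i(b)$ via $\Psi_{i(b)i(a)}$) and $\Phi_b$ are morphisms $\mcN_a\cap\mcN_b\to\mcV_{i(b)}$ lifting the same morphism $\phi|_{\mcN_a\cap\mcN_b}$ through $\pp_{i(b)}$ (here one uses the commutative square relating $\pp_i$, $\pp_j$ and $\Psi_{ij}$, $\psi_{ij}$ from the construction of $\mcP$). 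By the \emph{uniqueness} clause of Theorem~\ref{theo lift of phi mixed to Phi graded domain} the two must coincide, so the local lifts are compatible and patch to a global morphism $\Phi:\mcN\to\mcP$ satisfying $\pp\circ\Phi=\phi$.

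I expect the gluing compatibility to be the main obstacle, not because it is deep but because it requires carefully bookkeeping the transition morphisms on both sides of the diagram and invoking the cocycle condition $\Psi_{ij}\circ\Psi_{jk}\circ\Psi_{ki}=\id$ so that the local identifications are consistent on triple overlaps; the functoriality recorded in Remark~\ref{rem psi to Psi is a functor domains} is exactly what makes the uniqueness arguments on double and triple overlaps line up. For global uniqueness of $\Phi$, I would argue that any lift restricts on each $\mcN_a$ to a morphism of $H$-graded domains lifting $\phi_a$ through $\pp_{i(a)}$, which by the local uniqueness of Theorem~\ref{theo lift of phi mixed to Phi graded domain} must equal $\Phi_a$; since the $\mcN_a$ cover $\mcN$ and morphisms of $H$-graded manifolds are determined by their restrictions to an atlas, the global lift is unique. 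The only subtlety to check is that the patched $\Phi$ is genuinely a morphism of $H$-graded manifolds in the sense of Section~\ref{sec def H-graded manifold, finite case}, i.e. that it locally coincides with a morphism of $H$-graded domains, which is immediate since each $\Phi_a$ is such a morphism by construction.
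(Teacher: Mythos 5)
Your proposal is correct and follows essentially the same route as the paper: lift locally via the uniqueness statement of Theorem~\ref{theo lift of phi mixed to Phi graded domain}, glue the local lifts using that same uniqueness on overlaps, and deduce global uniqueness by restriction to charts. The only (cosmetic) difference is bookkeeping: you refine the atlas of $\mcN$ so each piece maps into a single chart $\mcU_{i(a)}$, whereas the paper fixes a chart $\mcW_j$ of $\mcN$ and first shows the lifts $\Phi_{ji}$ into the various $\mcV_i$ agree on $\mcV_i\cap\mcV_{i'}$ before gluing over $\mcN$.
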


\begin{proof}

Let us take an atlas $\{\mcW_j\}$ of $\mcN$ and an atlas $\{\mcU_i\}$ of $\mcM$. The atlas $\{\mcU_i\}$ generates the atlas $\{\mcV_i\}$ of $\mcP$. Let us show that for any $j$ there exists unique morphism $\Phi_j: \mcW_j \to  \mcP$ of $H$-graded manifolds such that the following diagram is comitative
\[
	\begin{tikzcd}
	& \mcP \arrow[dr,"\pp"] \\
	\mcW_j \arrow[ur,"\exists!\Phi_j"] \arrow[rr,"\phi|_{\mcW_j}"] && \mcM
	\end{tikzcd}.
	\]

By Therem \ref{theo lift of phi mixed to Phi graded domain} for any $i$ there exits unique morphism $\Phi_{ji}:  \mcW_j \to  \mcV_i$ such that the following diagram is comutative
\[
	\begin{tikzcd}
	& \mcV_i \arrow[dr,"\pp"] \\
	\mcW_j \arrow[ur,"\exists!\Phi_{ji}"] \arrow[rr,"\phi|_{\mcW_j}"] && \mcU_i
	\end{tikzcd}.
	\]
This means that the diagram 
\[
	\begin{tikzcd}
	& \mcV_i\cap \mcV_{i'} \arrow[dr,"\pp"] \\
	\mcW_j \arrow[ur,"\Phi_{ji}=\Phi_{ji'}"] \arrow[rr,"\phi|_{\mcW_j}"] && \mcU_i \cap \mcU_{i'} 
	\end{tikzcd}.
	\]
is comitative for $\Phi_{ji}$ and for $\Phi_{ji'}$. Since the graded lift is unique,  we have $\Phi_{ji}= \Phi_{ji'}$ as morphisms $\Phi_{ji}, \Phi_{ji'}:\mcW_j  \to   \mcV_i\cap \mcV_{i'} $. Hence, the required $H$-graded morpism $\Phi_{j}: \mcW_j\to \mcP$ is well defined and unique.

Further, the morphisms $ \Phi_{j}: \mcW_j\to \mcP$ and $\Phi_{j'}: \mcW_{j'}\to \mcP$ both make the following diagram commutative
\[
	\begin{tikzcd}
	& \mcP \arrow[dr,"\pp"] \\
	\mcW_j \cap \mcW_{j'}\arrow[ur,] \arrow[rr,"\phi|_{\mcW_j\cap \mcW_{j'}}"] && \mcM
	\end{tikzcd}.
	\]
Since the $H$-graded lift is unique, we have $ \Phi_{j}|_{\mcW_j \cap \mcW_{j'}} = \Phi_{j'}|_{\mcW_j \cap \mcW_{j'}}$. We put $\Phi|_{\mcW_j} = \Phi_j$. Clearly, $\Phi$ is the required morphism. The proof is complete. 
\end{proof}

\subsection{Examples of $H$-coverings of manifolds and supermanifolds}

\subsubsection{$H$-coverings for usual manifolds}
The paper is devoted to the construction of $H$-coverings for supermaniolds. However, we can apply our construction to the usual manifold. In fact, consider the trivial homomorphism $\phi: H=\Z_2\to 0$. Let us describe the $H$-covering for $\mathbb {CP}^1$.

We fix the standard coordinates $x$ and $y$ for $\mathbb {CP}^1$ with transition functions
$$
y=\frac{1}{x}.
$$
Let us apply the covering projection morphism $\pp$ to our transition functions.
$$
\pp^*(y) = \pp^*\Big(\frac{1}{x}\Big), \quad \text{or} \quad y^{\bar 0} + y^{\bar 1} = \frac{1}{x^{\bar 0} + x^{\bar 1}},
$$
where $\Z_2 = \{ \bar 0, \bar 1\}$. Finally, we obtain the following $H$-graded morphism, given in local graded coordinates by
\begin{align*}
y^{\bar 0} = \frac{ x^{\bar 0}}{(x^{\bar 0})^2 - (x^{\bar 1})^2}, \quad y^{\bar 1} = \frac{ -x^{\bar 1}}{(x^{\bar 0})^2 - (x^{\bar 1})^2}.
\end{align*}

\subsubsection{$H$-coverings for supermanifolds, example}

Let us take $H=\Z_4 =\{\tilde 0,\tilde 1,\tilde 2,\tilde 3\}$ and $\phi: H\to \Z_2$ defined by $\phi(\tilde 1)=\bar 1$. We apply our the covering construction to the projective superspace $\mathbb {CP}^{1|1}$. 
We fix the standard coordinates $(x,\xi)$ and $(y, \eta)$ for $\mathbb {CP}^1$ with transition functions
$$
y=\frac{1}{x}, \quad \eta=\frac{1}{x}\xi.
$$
Applying the covering construction we get
\begin{align*}
&y^{\tilde 0} = \frac{ x^{\tilde 0}}{(x^{\tilde 0})^2 - (x^{\tilde 2})^2}, \quad y^{\tilde 2} = \frac{- x^{\tilde 2}}{(x^{\tilde 0})^2 - (x^{\tilde 2})^2},\\
&\eta^{\tilde 1} = \frac{ x^{\tilde 0}\xi^{\tilde 1}}{(x^{\tilde 0})^2 - (x^{\tilde 2})^2} - \frac{ x^{\tilde 2} \xi^{\tilde 3}}{(x^{\tilde 0})^2 - (x^{\tilde 2})^2},\\
&\eta^{\tilde 3} = \frac{x^{\tilde 0}\xi^{\tilde 3}}{(x^{\tilde 0})^2 - (x^{\tilde 2})^2} - \frac{ x^{\tilde 2} \xi^{\tilde 1}}{(x^{\tilde 0})^2 - (x^{\tilde 2})^2}.
\end{align*}

\section{Graded manifolds and representation theory of finitely generated abelian groups}\label{sec Graded manifolds and representation theory of finitely generated abelian groups}

In this section, we will see that the theory of $H$-graded manifolds is closely related to the representation theory of finitely generated abelian groups $H$.

\subsection{Different approaches to the concept of a $\Z$-graded domain}
In this section, we give a definition of a $H$-graded domain for any finitely generated abelian group $H$ and compare it with some other definitions. For example,  we will see that in the case of a finite abelian group $H$, it is necessary to consider a complex version of the theory.

\subsubsection{A naive approach}\label{sec A naive approach} Let us first consider the case of a $\Z$-graded domain. A naive definition of this domain is as follows. We start with a finite-dimensional $\Z$-graded vector space
$$
V^* = \bigoplus_{k\in \Z} V^*_k,
$$
and choose a basis $
x_1^k, \ldots, x_{\dim V^*_k}^k$ in any $V^*_k$. The number $k\in \Z$ is called the {\it weight} of the variable $x_i^k$. Then we define the weight of a monomial in the natural way. In this case any polynomial or series in our variables $(x^k_i)$ can be decomposed into a (infinite or finite) sum of homogeneous components. 

\subsubsection{An approach via grading operator}\label{sec grading operator} Another approach to the notion of a $\Z$-graded domain, a weight, and a homogeneous component of a function is developed in \cite{Grab}, see also the references cited therein. The graded structure can be encoded by  the following vector field, called a grading operator or a weight vector field  
\begin{equation}\label{eq grading operator}
    v = \sum_{k\in \Z} k x^k \frac{\partial}{\partial x^k}.
\end{equation}
(We have dropped the subscript here.)
We have $v(x^p) = p x^p.$ Now a smooth or holomorphic function $F$ depending on variables $(x^k_i)$ is called homogeneous of weight $p$ if $v (F) = pF$.

\begin{remark}
  The weight $p$ does not need to be an integer, but any real number $p\in \mathbb R$. Furthermore, even in the case where the weights of the local coordinates are integers, a smooth function may have any real weight $r\in \R$. For example, the function $F= (x^k)^{\frac{r}{k}}$ has weight $r$.  
  \end{remark}

\subsubsection{An  approach via character group}\label{sec An  approach via character group} A grading operator $v$ induces a $\Z$-action on $V^*$. Indeed, consider the automorphism $\Gamma: =\exp(v)$. If $F$ is a homogeneous function of weight $p$, for example a local coordinate, we have
\begin{align*}
    \exp(v) (F) = \id (F) + v(F) + \frac{1}{2} v\circ v (F) + \cdots = \\
    F + p F + \frac{1}{2} p^2 F + \cdots = e^p F.
\end{align*}
The $\Z$-action on $V$ is defined as follows
$$
\Z\ni 1 \longmapsto \Gamma, \quad \Z\ni n  \longmapsto \Gamma^n, \quad \Gamma^n (x^p) = e^{np} x^p,
$$
and by duality on $V$.  Since the $\Z$-action in $V$ is defined, it indices the $\Z$-action in the algebra of smooth functions on $V$.  

\begin{remark}
    The $\Z$-action on $V^*$ constructed above is completely reducible. The irreducible components corresponds to the real characters 
    $$
\chi_p:\Z \longrightarrow \mathbb R^{\times}, \quad \Z\ni n \longmapsto \chi_p(n) = (e^{p})^n\in \mathbb R^{\times}. 
$$
   Now we call a function $F$ homogeneous of weight $\chi_p$ if
$$
g \cdot F = \chi_p(g) F, \quad g\in \Z.
$$
In other words, $F$ is homogeneous if it generate an irreducible $\Z$-module. 
\end{remark}

\begin{definition}
  The real (or complex) character group $\operatorname{Ch}_{\mathbb R}(\Z)$  (respectively, \linebreak $\operatorname{Ch}_{\mathbb C}(\Z)$) of $\Z$, also called the real (or complex) Pontryagin dual of $\Z$, is the group of all homomorphisms $
  \chi:\Z \to \mathbb R^{\times}$ (respectively, $\chi:\Z \to \mathbb C^{\times}).
  $
\end{definition}

We see that if a real grading operator $v$ as in Section \ref{sec grading operator}  is defined on $V^*$, then $V^*$ is a real completely reducible $\Z$-representation. Conversely, if $V^*$ is a real completely reducible $\Z$-module with only positive characters, that is $\chi(1)>0$, then we can construct a grading operator $v = \log(\Gamma)$, where $\Gamma$ is the image of $1\in \Z$. If some characters of $V^*$ are negative, we cannot define a corresponding grading operator in general. Indeed, consider an example.  

\begin{example}
    Let $V^*$ be a real completely reducible $\Z$-representation with characters $\chi_1$ and $\chi_{-1}$. In other words,
    $
    \chi_1 (1) =1$, the trivial character, and $\chi_{-1} (1) = -1.
    $
    Let $x^{\chi_1}$ and $x^{\chi_{-1}}$ be graded coordinates. (Assume that $\dim V^*_{\chi_1} = \dim V^*_{\chi_{-1}} = 1$.)  In an additive picture $x^{\chi_1}$ must have weight $0$, since $\chi_1 \chi_1 = \chi_1$, and the weights of $x^{\chi_1}$ and $(x^{\chi_1})^2$ are equal. Let $x^{\chi_{-1}}$ have a certain weight $A\in \R$ with respect to some grading operator. Since $(x^{\chi_{-1}})^2$ has weight $\chi_{1}$, we get $A+A=0$, and hence $A=0$, which is a contradiction, because in the multiplicative picture ${\chi_{-1}} \ne {\chi_{1}}$. 
\end{example}

In summary, the approach through completely reducible real $\Z$-representations is more general than via grading operators with real eigenvalues. In the next sections, we will see that in the case of a finitely generated abelian group, especially a finite abelian group, we need to work over $\C$.

\subsection{Some useful facts from the representation theory of finitely generated abelian groups}\label{sec useful facts from the representation}

It is well known that any finitely generated abelian group $H$ is isomorphic to   
$$
 H\simeq \Z^r \times \Z_{p_1} \times \cdots \times \Z_{p_k}.
 $$
Further we will need the following facts.

 \begin{enumerate}
     \item Any irreducible complex representation of $H$ is $1$-dimensional, and corresponds to a complex character:
     $$
h\cdot v  = \chi(g) v, \quad \chi\in  \operatorname{Ch}_{\mathbb C}(H).
$$
This is a consequence of Schur's lemma.

\item Any real irreducible representation of $H$ is $1$- or $2$-dimensional. 
 A real $1$-dimensional irreducible representation of $H$ corresponds to a real-valued character $\chi:H \to \R^{\times}$. A real $2$-dimensional irreducible representation of $H$ corresponds to a pair of complex conjugate characters $(\chi, \bar \chi)$ that are not real.

\item We have in both real and complex cases
$$
\operatorname{Ch}(G_1\times G_2)  \simeq  \operatorname{Ch}(G_1) \times \operatorname{Ch}(G_2). 
$$

\item  We have
$$
\operatorname{Ch}_{\mathbb C}(\Z_p) \simeq \Z_p, \quad \operatorname{Ch}_{\mathbb R}(\Z_{p}) \subset \{\pm 1\}, \quad \operatorname{Ch}_{\mathbb R}(\Z) \simeq \mathbb R^{\times}, \quad \operatorname{Ch}_{\mathbb C}(\Z) \simeq \mathbb C^{\times}.
$$
For instance, for a finite abelian group, there are sufficient irreducibles only in the complex case. 
 \end{enumerate}

\subsection{$H$-graded domains and manifolds} In this section, we will define a $H$-graded manifold, where $H$ is any finitely generated abelian group, using representation theory. Note that it is not clear how to use a grading operator in this case: we have to replace $k$ in Formula (\ref{eq grading operator}) by an element $h\in H$, which is not defined. 

Let $H$ be a finitely generated abelian group and let $V=V_{\bar 0} \oplus V_{\bar 1}$ be a completely reducible complex finite-dimensional representation of $H$. (We assume here that $H \cdot V_{\bar i} \subset V_{\bar i}$. We can omit this condition considering the group $\tilde H = H\times \Z_2$, where the factor $\Z_2$ defines the parity of an element.) By observations of Section \ref{sec useful facts from the representation} this representation always has the following form
$$
V = \bigoplus_{\chi\in \operatorname{Ch}_{\mathbb C}(H)} V_{\chi},
$$
where $V_{\chi}$ is the sum of equivalent irreducible representations corresponding to the same complex character $\chi$. Note that above we considered a different situation: now $V_{\chi}$ may have non trivial even and odd part simultaneously.

If $W$ is a $H$-invariant open set in $V_{\bar 0}$, then the algebra $\tilde \mcO(W)$ of complex-valued smooth (or holomorphic) superfunctions is a complex $H$-representation. (Note that a function $F(z_1+iz_2)$ is called smooth if it is smooth as a function of two variables $(z_1,z_2)$.) Denote by $\chi_1$ the trivial character of $H$.

\begin{definition}\label{def graded domaim general}
A $H$-graded domain is a ringed space $\mcV :=(U, \mcO)$, where $U\subset V_{\chi_1}\cap V_{\bar 0}$ is an open set and $\mcO$ is the sheaf generated by the presheaf 
$$
U \supset U' \mapsto \lim\limits_{\iota(U')\subset W} \tilde\mcO(W),
$$ 
where $U'\subset U$ is open, $W$ is a $H$-invariant open set in $V_{\bar 0}$ containing $U'$ and $\iota$ is the natural inclusion.  
\end{definition}

\begin{remark}
    We can replace $\tilde\mcO(W)$ by the algebra of  polynomials or formal series. In the case $H= \Z^p \otimes \Z_2^q$ we may assume that $V$ is pure real, since in this case we have enough elements in $\operatorname{Ch}_{\mathbb R}(H)$.
\end{remark}

\begin{remark}
  As we saw in Section \ref{sec An  approach via character group},  we get the previous definition of a $\Z$-graded domain if we assume that all our irreducibles are real $1$-dimensional and correspond to real-valued positive characters. In the case of a finite abelian group  $H$ this assumption is very restrictive, since $\operatorname{Ch}_{\mathbb R}(\Z_{p}) \subset \{\pm 1\}$.  Thus, in the real case for $H$ finite abelian, a meaningful theory is obtained only in the case of a group $H=\Z^{q}_2$. Moreover, typically, one considers the sheaf of series (or polynomials) with respect to coordinates of weight $\chi\ne \chi_1$. In this paper, we also develop a smooth approach. 
\end{remark}

A {\it morphism of graded domains} is an $H$-equivariant morphism of the corresponding ringed spaces with conditions as in Section \ref{sec morphisms of H-graded domains}.  In particular, in coordinates any morphism is given by $H$-homogeneous functions. Now, a $H$-graded manifold for any finitely generated abelian $H$ can be defined as in Section \ref{sec def H-graded manifold, finite case}.

\begin{remark}\label{rem naive definition}
    Let $H$ be a finite abelian group. Consider the naive definition of a $H$-graded domain, see Section \ref{sec A naive approach}. As above, let us choose a basis $x^{g_1}_{i_1}, \ldots, x^{g_k}_{i_k}$ of a real $H$-graded vector space $V^*_{\R} = \bigoplus_{g\in H} V^*_g$, where $g_j\in H$ is the weight of the variable $x^{g_j}_{i_j}$.  Observe that it is not clear how to define a homogeneous smooth function in the variables $(x^{g_j}_{i_j})$ via a grading operator. Further, consider the complexification $V^*_{\C} :=V^*_{\R}\otimes \C$. Now we can define a $H$-action in $V^*_{\C}$ by 
    $$
    h \cdot x^{\chi_{j}}_{i_j}  = \chi_{i} (h) x^{\chi_{j}}_{i_j}
    $$
    and repeat our construction of a $H$-graded domain.
   (Here, we replace $g_i$ by characters using the fact $\operatorname{Ch}_{\mathbb C}(H) \simeq H$.) We see that $V^*_{\R} \subset V^*_{\C}$ is a choice of a real form of a graded domain.  Hence, we obtain the naive definition by assuming that a real form of  $V^*_{\C}$ is chosen and we consider only real polynomials or real series in variables $x^{\chi_{i}}$, $\chi_{i} \ne \chi_{1}$. Note also that the $H$-action does not preserve this real form, and hence a definition of a homogeneous function via characters does not work either.

   More general for complex-analytic graded manifold $\mcM$ in our sense, if in any graded chart a real form as above is fixed and transition functions preserve these real forms, then we can define a real form $\mcM^{\R}$ of $\mcM$. This is a real-analytic $H$-graded manifold, which is usually considered in the literature. We note that, in general, we cannot expect that a real form of a graded manifold exists. 
 Conversely, if we have a real-analytic $H$-graded manifold, then the question of existence of its complexification is related to the corresponding problem in the nongraded case, see \cite{BW,Grauert, Kul}.  
\end{remark}

   \subsection{About a decomposition of a graded function into homogeneous components}

We can give a definition of a homogeneous function for any $H$. Namely, a function $F$ is called homogeneous  of weight $\chi\in \operatorname{Ch}_{\C}(H)$ if 
$$
g \cdot F = \chi(g)F, \quad g\in H. 
$$
Now the question whether a function $F$ can be decomposed into a (finite or infinite) sum of homogeneous components is equivalent to the question whether the subrepresentation generated by $F$ is  completely reducible.  A $H$-representation is always completely reducible in the case where $H$ is finite abelian.

\end{document}